\newcommand{\B}[0]{{\cal B}}
\newcommand{\C}[0]{{\cal C}}
\newcommand{\D}[0]{{\cal D}}
\newcommand{\T}[0]{{\cal T}}
\newcommand{\old}[1]{{}}
\newtheorem{notn}[theorem]{Notation}{\bfseries}{\itshape}
\newtheorem{observation}[theorem]{Observation}
\newtheorem{defi}[theorem]{Definition}
\newtheorem{clm}[theorem]{Claim}
\newtheorem{lem}[theorem]{Lemma}
\newtheorem{cor}[theorem]{Corollary}
\newtheorem{rem}[theorem]{Remark}
\begin{document}
\title{
				Reconstruction of the Path Graph
						}

\author{Chaya Keller
			\inst{1}
			\thanks{The work of the first author was partially supported by by Grant 635/16 from the Israel Science Foundation, by the Shulamit Aloni Post-Doctoral Fellowship of the Israeli Ministry of Science and Technology, by the Kreitman Foundation Post-Doctoral Fellowship
and by the Hoffman Leadership and Responsibility Program of the Hebrew University.}
	\and
					Yael Stein\inst{2}
					\thanks{The work of the second author was partially supported by the Lynn and William Frankel Center for Computer Science and by grant 680/11 from the Israel Science Foundation (ISF).}
}  		
  		
\authorrunning{C. Keller and Y. Stein} 
\institute{Department of Mathematics, Ben-Gurion University of the Negev
	Beer-Sheva 84105, Israel
	\and
	Department of Computer Science, Ben-Gurion University of the Negev
  Beer-Sheva 84105, Israel
	}

\index{Example, Anton}
\index{Veryknown, Sylvia}


\maketitle
\thispagestyle{plain}

\begin{abstract}
Let $P$ be a set of $n \geq 5$ points in convex position in the plane.
The path graph $G(P)$ of $P$ is an abstract graph whose vertices are non-crossing spanning paths
of $P$, such that two paths are adjacent if one can be obtained from the other
by deleting an edge and adding another edge.

We prove that the automorphism group of $G(P)$ is isomorphic to $D_{n}$, the dihedral group of
order $2n$. The heart of the proof is an algorithm that first identifies the vertices of $G(P)$ that correspond to boundary paths of $P$, where the identification is unique up to an automorphism of $K(P)$ as a geometric graph, and then identifies (uniquely) all edges of each path represented by a vertex of $G(P)$. The complexity of the algorithm is $O(N \log N)$ where $N$ is the number of vertices of $G(P)$.
\end{abstract}


\section{Introduction}

A \emph{geometric graph} is a graph whose vertices are a finite set of points in general position in the plane, and whose edges are closed segments connecting distinct points.
We consider the complete convex geometric graph $K(P)$, in which the vertex set is a convex set $P$ of $n$ points in the plane, and the edges are all segments connecting pairs of vertices. Without loss of generality we will henceforth assume that $P$ is the vertex set of a regular $n$-gon.

\begin{defi}
Let $P$ be a set of $n$ points in the plane. The \emph{path graph} $G(P)$ is defined
as follows. The vertices of $G(P)$ are the simple (i.e., non-crossing) spanning paths
of $K(P)$. Two such vertices are adjacent in $G(P)$ if they differ in exactly two edges, i.e.,
if one can be obtained from the other by deleting an edge and adding another edge.
\end{defi}

The path graph was introduced in 2001 by Rivera-Campo and Urrutia-Galicia~\cite{RU01} who showed
that when $P$ is in convex position, $G(P)$ is Hamiltonian. Following~\cite{RU01}, several works
studied $G(P)$ in the convex case. Akl et al.~\cite{AIM07} showed that $|V(G(P))|=n2^{n-3}$ and that
$\mathrm{diam}(G(P)) \leq 2n-5$. Chang and Wu~\cite{CW09} determined the diameter exactly, showing
that $\mathrm{diam}(G(P)) = 2n-5$ for $n=3,4$ and $\mathrm{diam}(G(P)) = 2n-6$ for $n \geq 5$.
Fabila-Monroy et al.~\cite{FFHHUW09} showed that the chromatic number of $G(P)$ is $n$.
Wu et al.~\cite{WCPW11} presented algorithms for generating plane spanning paths
efficiently. The general (i.e., non-convex) case is less-studied, and it is not known
even whether $G(P)$ is connected for all $P$ (see~\cite{AIM07}).

The study of $G(P)$ evolved from the study of the \emph{geometric tree graph} $\T(P)$ which has all
non-crossing spanning trees of $P$ as its vertices, and two vertices are adjacent in $G(P)$ if they
differ in exactly two edges. Defined by Avis and Fukuda~\cite{AvisFukuda} as the geometric
counterpart of the classical \emph{tree graph}~\cite{Cummins}, $\T(P)$ was studied in quite a few
works, both in the convex and in the general case (e.g.,~\cite{AAH02,AR07,GNT00,Hernando1,Hernando,KP15+}).

Some of the central results on $\T(P)$, such as Hamiltonicity and upper/lower bounds on the diameter
(see~\cite{AvisFukuda,Hernando1}) already have counterparts for $G(P)$ (proved in~\cite{RU01,AIM07,CW09}).
In this paper we establish a counterpart of another result: exact determination of the \emph{automorphism
group} in the convex case. For $\T(P)$, Hernando et al.~\cite{Hernando1} showed that $\mathrm{Aut}(\T(P))$
is $D_{n}$, the dihedral group of rotations and reflections of a regular $n$-gon. Since $\mathrm{Aut}(K(P)) \cong D_{n}$, it follows that $D_{n}$ is isomorphic to a subgroup of $\mathrm{Aut}(G(P))$.

Our main result is that there are no other automorphisms on $G(P)$.

\begin{theorem}\label{Thm:Main}
Let $P$ be a set of $n \geq 5$ points in convex position in the plane, and let $G(P)$ be its path graph.
Then $\mathrm{Aut}(G(P)) \cong D_{n}$.
\end{theorem}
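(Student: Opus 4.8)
We need to show that every automorphism $\phi$ of $G(P)$ is induced by a symmetry of the regular $n$-gon $P$. The plan is to identify structural features of $G(P)$ that are invariant under automorphisms and rich enough to pin down the combinatorial type of each spanning path. The abstract already tells us the shape of the argument: first locate, inside $G(P)$, the vertices corresponding to \emph{boundary paths} (the $n$ Hamiltonian paths obtained from the convex polygon $P$ by deleting one boundary edge), and show these are recognizable up to an automorphism of $K(P)$; then, having fixed the "coordinate system" given by the boundary paths, reconstruct the full edge set of every path labelling a vertex of $G(P)$, so that $\phi$ must act as a relabelling coming from $D_n$.

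\textbf{Step 1: Intrinsic characterization of boundary paths.} First I would look for a graph-theoretic property of a vertex $v \in G(P)$ — e.g., its degree, the degree sequence of its neighbourhood, the structure of short cycles through it, or the sizes of certain cliques containing it — that singles out exactly the $n$ boundary paths. A natural candidate: the number of edges that can be added/removed is governed by how "flexible" the path is, and boundary paths, being the most constrained non-crossing spanning paths, should have extremal degree or an extremal local structure. Concretely, for a path $Q$ the neighbours in $G(P)$ correspond to pairs (edge to delete, edge to add keeping the path non-crossing and spanning); counting these in terms of the combinatorial structure of $Q$ should show boundary paths are distinguished. This gives a $\phi$-invariant set $B$ of $n$ vertices, and the induced subgraph $G(P)[B]$ together with its embedding in $G(P)$ should carry a natural cyclic/dihedral structure matching the $n$-gon's boundary.

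\textbf{Step 2: Reconstructing arbitrary paths from their relation to $B$.} With $B$ identified, I would show that an arbitrary vertex $v$ (path $Q$) is determined by its "distances" or adjacency pattern to the vertices of $B$, or more robustly by iterating a local reconstruction: each edge of $Q$ can be detected by which boundary paths $v$ can be transformed toward, or by examining common neighbours of $v$ with elements of $B$. The key sub-claim is that the map from (non-crossing spanning paths) to (their combinatorial incidence data with $B$, as seen purely in $G(P)$) is injective, and that $\phi$ intertwines this data. Since $\phi$ permutes $B$ according to some element $\sigma \in \mathrm{Aut}(G(P)[B]) $, and this group should be exactly $D_n$, one concludes $\phi$ agrees with the $D_n$-action on all of $V(G(P))$; injectivity of the reconstruction then forces $\phi$ to \emph{be} that element of $D_n$. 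Combined with $D_n \hookrightarrow \mathrm{Aut}(G(P))$ from the introduction, this yields $\mathrm{Aut}(G(P)) \cong D_n$.

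\textbf{Main obstacle.} I expect the hard part to be Step 2 — proving that the edges of an arbitrary path are \emph{uniquely} recoverable from purely graph-theoretic data relative to $B$, rather than merely recoverable up to the symmetries of $P$. One must carefully handle paths with their own symmetries and near-boundary paths where several candidate edge sets look locally alike, and control the propagation of the reconstruction (the $O(N \log N)$ bound in the abstract hints the authors actually run an explicit algorithm that peels off edges one at a time). A secondary subtlety is verifying that $\mathrm{Aut}(G(P)[B])$ is no larger than $D_n$ and that the boundary-path characterization in Step 1 is tight for all $n \geq 5$ (small cases $n=5,6$ may need separate checking, which is presumably why the theorem excludes $n \leq 4$).
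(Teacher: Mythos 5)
Your outline reproduces the architecture that the paper itself follows (and that its abstract announces): isolate the $n$ boundary paths by an extremal/clique property, extract a cyclic structure on them, then reconstruct every path relative to that ``coordinate system.'' Your guess that boundary paths are distinguished by extremal degree is in fact what the paper proves (they are exactly the vertices of degree $3n-7$; they also form the unique union max-clique of size $n$). However, as a proof the proposal has two genuine gaps. First, the set $\B$ of boundary paths induces a \emph{complete} subgraph of $G(P)$ (it is a max-clique), so $\mathrm{Aut}(G(P)[\B])\cong S_n$, not $D_n$; the dihedral structure cannot be read off the induced subgraph ``together with its embedding'' without a concrete mechanism. The paper supplies one: for $u,v\in\B$ the missing boundary edges $e_u,e_v$ share an endpoint if and only if the edge $(u,v)$ of $G(P)$ lies in no max-clique of size $4$, and this relation is an $n$-cycle on $\B$. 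Without some such criterion you only get the boundary paths as an unordered set, which bounds $\mathrm{Aut}(G(P))$ by $S_n$ rather than $D_n$.

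Second, and more seriously, your ``key sub-claim'' --- that a path is determined by its distance/adjacency data relative to $\B$ --- is false as stated. If $\ell(v)=1$, the boundary edges of $P(v)$ form two arcs $P_1,P_2$, and there are exactly two simple spanning paths completing $P_1\cup P_2$ by a single diagonal; these two paths are adjacent to the same two elements of $\B$ and at distance $2$ from all the others, so their incidence data with $\B$ coincide completely. Distances to $\B$ do recover the level $\ell(v)=|\D(v)|$ and the set $\B(v)$ of boundary edges of $P(v)$ (the elements of $\B$ at distance exactly $\ell(v)$ from $v$ correspond precisely to the boundary edges absent from $P(v)$), but the diagonals require an extra idea: one must decide which endpoints of the boundary arcs are leaves of $P(v)$, and the paper does this by an induction on the level, testing for the existence of a neighbor $v'$ at an adjacent level containing or omitting a specific boundary edge. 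You correctly flag this as the main obstacle, but you do not resolve it, and the mechanism you do propose would not.
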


The proof of Theorem~\ref{Thm:Main} relies on an algorithm that allows recovering all edges of each
path represented by a vertex of $G(P)$ (up to an automorphism of $K(P)$ as a geometric graph),
given $G(P)$ as an abstract graph. The algorithm exploits analysis of maximal cliques in $G(P)$,
following an approach pioneered by Urrutia-Galicia~\cite{Virginia-PhD}.
First, we use the structure of the max-cliques to identify an ordered subset of $n$ vertices of $G(P)$ that corresponds to the boundary paths of $P$, where the identification is fixed up to an automorphism of $K(P)$.
Then we show that once
the ordered subset is fixed, all edges of each path can be determined uniquely by examining distances between
various vertices of $G(P)$.
The running time of the algorithm is $O(N \log N)$ where $N=|V(G(P))|$, which is close to optimal, since for each of the $N$ vertices of $G(P)$ we recover the $n-1=\Theta( \log N)$ edges in the path it represents.
It should be noted that the determination of $\mathrm{Aut}(\T(P))$ in~\cite{Hernando1}
is non-constructive, and no efficient algorithm is known for full recovery of $\T(P)$.
In this sense, our result is stronger than the analogous result on $\T(P)$.
Likewise, while the technique of Urrutia-Galicia~\cite{Virginia-PhD}
was used in several previous works, this is the first time it is used for complete recovery of $G(P)$, thus solving completely
a natural \emph{graph reconstruction} problem (see, e.g.,~\cite{Bondy} for a definition and survey of
reconstruction problems).

The paper is organized as follows. Hereinafter, we present notations and a simple observation
used throughout the paper. In Section~\ref{sec:maxclique} we study the structure of maximal cliques in $G(P)$. In
Section~\ref{sec:main} we prove the main theorem. We conclude the paper with a complexity analysis, in
Section~\ref{sec:analysis}, and a few open problems.

\subsection*{Notations}
\label{sec:notations}

In this section we present notations and simple observations that will be used in the sequel.

\medskip \noindent Throughout the paper, $P$ is a set of points in convex position in the plane.
The edges of $K(P)$, the complete geometric graph on $P$, are divided into two classes: $n$ \emph{boundary
edges} of $\mathrm{Conv}(P)$ and ${{n}\choose{2}}-n$ \emph{diagonals}, i.e., edges internal to $\mathrm{Conv}(P)$.
We denote the set of boundary edges by $\B(P)$, and say that $x,y \in P$ are \emph{neighboring} if $(x,y) \in \B(P)$.
An automorphism of $K(P)$ as a geometric graph is an automorphism of $K(P)$ as an abstract graph that, in
addition, maps crossing edges into crossing edges and non-crossing edges into non-crossing edges.

As defined above, $G(P)$ denotes the (non-crossing) spanning path graph of $P$.
For $v \in V(G(P))$, $P(v)$ denotes the path represented by $v$.
For the sake of convenience, we sometimes use the term $P(v)$ also for the edge-set of the path represented by $v$.
We stress that we usually denote this edge-set by $v$; the notation $P(v)$ is used for it only in places when the meaning
is clear from the context.

The set of boundary edges of $P(v)$, that is,
$P(v) \cap \B(P)$, is denoted by $\B(v)$. The set of diagonals of $P(v)$ is denoted by $\D(v)=P(v) \setminus \B(v)$.
$P(v)$ is called a \emph{boundary path} if all its edges are boundary edges.
We denote the set of vertices of $G(P)$ that represent boundary paths by $\mathcal{B}$. Note that while $\B(v)$ denotes
the boundary edges of a specific path, $\B$ denotes a subset of the vertices of $G(P)$.

For any graph $G$, the distance between vertices $x,y,$ denoted $\mathrm{dist}(x,y)$, is the shortest length of
a path in $G$ from $x$ to $y$. The distance of a vertex from a set $\C$ of vertices is defined as $\mathrm{dist}(x,\C) =
\min_{y \in \C} \mathrm{dist}(x,y)$. The \emph{degree} of a vertex $v$ in a graph $G$ is the number of edges of $G$ that emanate from $v$, and is denoted by $\mathrm{deg}_G(v)$. A vertex is called a \emph{leaf} if its degree is $1$. An edge is called a
\emph{leaf edge} if one of its endpoints is a leaf. A vertex that is not a leaf is called an \emph{internal vertex}.

\medskip \noindent We use the following simple observation on the structure of simple spanning paths of $P$.

\begin{observation}\label{Obs:Basic}
Let $S$ be a simple spanning path of a set $P$ of points in convex position in the plane. Then:
\begin{enumerate}
\item Both leaf edges of $S$ are boundary edges.

\item If $S$ is not a boundary path, then its leaves cannot be neighboring vertices of the boundary.
\end{enumerate}
\end{observation}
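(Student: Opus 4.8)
The plan is to base both parts on the standard fact that, for points in convex position, two chords with endpoints in $P$ cross each other if and only if their four endpoints alternate around the boundary cycle of $\mathrm{Conv}(P)$.

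For part~1, I would argue by contradiction. Write $S = v_1 v_2 \cdots v_n$ and suppose the leaf edge $(v_1,v_2)$ is a diagonal. Let $a$ and $b$ be the two boundary-neighbors of $v_1$ in $\mathrm{Conv}(P)$. Since $(v_1,v_2)$ is a diagonal, $v_2 \notin \{a,b\}$, and hence $a$ and $b$ lie on the two different open arcs into which the chord $(v_1,v_2)$ splits the remaining vertices of $P$. The subpath $v_2 v_3 \cdots v_n$ must visit both $a$ and $b$, so it contains some edge $(v_i,v_{i+1})$ whose two endpoints lie on opposite arcs; by the alternation criterion this edge crosses $(v_1,v_2)$, contradicting that $S$ is non-crossing. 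Applying the same argument to the leaf $v_n$ and its edge $(v_{n-1},v_n)$ finishes part~1.

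For part~2 I would prove the contrapositive: if the two leaves of $S$ are neighboring on the boundary, then $S$ is a boundary path. This goes by induction on $n$, the small cases being immediate. For the inductive step, let $v_1$ and $v_n$ be the leaves with $(v_1,v_n) \in \B(P)$, and let $a \neq v_n$ be the other boundary-neighbor of $v_1$. By part~1 the leaf edge at $v_1$ is a boundary edge, hence it equals $(v_1,a)$, i.e. $v_2 = a$. Now delete $v_1$: the set $P' = P \setminus \{v_1\}$ is again in convex position, and since the boundary-neighbors of $v_1$ were exactly $v_2$ and $v_n$, these two become boundary-neighbors in $\mathrm{Conv}(P')$. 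The subpath $v_2 v_3 \cdots v_n$ is a simple spanning path of $P'$ whose leaves, $v_2$ and $v_n$, are neighboring, so by the induction hypothesis it is a boundary path of $P'$. A boundary (Hamiltonian) path of a convex polygon is the boundary cycle minus a single edge, and its two leaves are precisely the endpoints of the removed edge; hence the removed edge is $(v_2,v_n)$, and every edge of $v_2 v_3 \cdots v_n$ is a boundary edge of $P'$ distinct from $(v_2,v_n)$ — and each such edge is also a boundary edge of $P$. Together with the boundary edge $(v_1,v_2)$, this shows all edges of $S$ lie in $\B(P)$, so $S$ is a boundary path.

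The one place that needs care is the crossing argument in part~1: one must check that the boundary-neighbors $a,b$ of $v_1$ really do fall on opposite arcs of the chord $(v_1,v_2)$, and that a path edge joining those two arcs must exist and must cross $(v_1,v_2)$. I expect this to be the only non-trivial point; once it is settled via the alternation characterization of crossings, part~2 follows by the routine peeling induction described above.
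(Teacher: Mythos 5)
Your argument is correct on both counts: the alternation criterion does place the two boundary-neighbors $a,b$ of $v_1$ on opposite open arcs of the chord $(v_1,v_2)$, forcing a label change (and hence a crossing) somewhere along $v_3,\ldots,v_n$, and the peeling induction for part~2 goes through because the boundary edges of $P\setminus\{v_1\}$ other than the new edge $(v_2,v_n)$ are exactly the boundary edges of $P$ not incident to $v_1$. The paper omits the proof of this observation as ``easy,'' so there is nothing to compare against; your write-up is a valid proof of the omitted claim.
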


\noindent The easy proof of the observation is omitted.

\section{Maximal Cliques in $G(P)$}
\label{sec:maxclique}

The reconstruction of the paths represented by vertices of $G(P)$ requires a fulcrum to start with.
Our fulcrum is understanding of the maximal cliques in $G(P)$. We note that the approach of exploiting
maximal cliques for this purpose was pioneered by Urrutia-Galicia~\cite{Virginia-PhD} in the context
of geometric tree graphs, and used recently in~\cite{KP15+}.

\begin{defi}
A \emph{max-clique} in a graph $G$ is a maximal (with respect to inclusion) clique
included in $G$. Since a max-clique is a complete graph on its vertex set, we shall
identify a max-clique with its set of vertices.
\end{defi}

We start our discussion of max-cliques with purely combinatorial considerations that do not exploit
the geometric nature of the problem. Let $u,v \in V(G(P))$ be neighbors.
We denote by $\bar{u}$ and $\bar{v}$ the sets of edges of $P(u)$ and $P(v)$, respectively.
Clearly, $|\bar{u} \cup \bar{v}|=n$, $|\bar{u} \cap \bar{v}| = n-2$, and $|\bar{u} \triangle \bar{v}|=2$.
Let $w$ be a common neighbor of $u$ and $v$ in $G(P)$ (if it exists).
Since $|P(w)|=n-1$ and $P(w)$ differs from each of
$P(v),P(u)$ in exactly two edges, there are exactly two possibilities for $\bar{w}$:

\begin{enumerate}
\item $\bar{w} \cap (\bar{u} \triangle \bar{v}) = \emptyset$, and then $(\bar{u} \cap \bar{v}) \subset \bar{w}$, i.e., $\bar{w}$ consists of $\bar{u} \cap \bar{v}$ plus an
additional edge,

\item $(\bar{u} \triangle \bar{v}) \subset \bar{w}$, and then $\bar{w} \subset (\bar{u} \cup \bar{v})$, i.e., $\bar{w}$ consists of all edges of $\bar{u} \cup \bar{v}$ except
for one edge of $\bar{u} \cap \bar{v}$.
\end{enumerate}

Note that if $w$ satisfies (1), then each other common neighbor of $u,v,w$ (i.e., each other element of the max-clique
that contains $u,v,w$) also satisfies (1). Conversely, each $w,w'$ that both satisfy (1) are neighbors. The same holds
with (1) replaced by (2). Hence, we obtain:

\begin{cor}
\label{cor:max}
Each edge $(u,v) \in E(G(P))$ is contained in at most two max-cliques:

\begin{itemize}

\item An \emph{intersection max-clique}
$$I(u,v) = \{w \in V(G(P)): \bar{w}=(\bar{u} \cap \bar{v}) \cup \{e\}, \mbox{for some } e \not \in \bar{u} \cap \bar{v}\},$$

\item A \emph{union max-clique}
$$U(u,v)=\{w \in V(G(P)): \bar{w}=(\bar{u} \cup \bar{v}) \setminus \{e\}, \mbox{for some } e \in \bar{u} \cup \bar{v}.$$

\end{itemize}
In addition, given three vertices in a max-clique in $G(P)$, they uniquely determine its type.

Note that by this definition, $u,v\in I(u,v)$ and $u,v\in U(u,v)$.
\end{cor}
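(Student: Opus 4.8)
The plan is to read everything off the combinatorial case analysis that precedes the statement. Fix adjacent $u,v$ and write $C=\bar u\cap\bar v$, so $|C|=n-2$ and $\bar u\triangle\bar v=\{a,b\}$ with $a\in\bar u\setminus\bar v$ and $b\in\bar v\setminus\bar u$; thus $\bar u=C\cup\{a\}$, $\bar v=C\cup\{b\}$ and $\bar u\cup\bar v=C\cup\{a,b\}$. The discussion above shows that every common neighbour $w$ of $u$ and $v$ satisfies $\bar w=C\cup\{e\}$ for some $e\notin C$ (type~(1)) or $\bar w=(C\cup\{a,b\})\setminus\{e\}$ for some $e\in C\cup\{a,b\}$ (type~(2)). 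Hence $I(u,v)$ is precisely the set of type-(1) vertices and $U(u,v)$ precisely the set of type-(2) vertices; substituting $e=a$ and $e=b$ in each description shows $u,v\in I(u,v)\cap U(u,v)$, which is the last assertion.

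Next I would record three adjacency facts, using that two distinct vertices of $G(P)$ are adjacent exactly when their edge-sets have symmetric difference of size $2$. Two distinct type-(1) vertices differ only in the extra edge, so they are adjacent and $I(u,v)$ is a clique; likewise two distinct type-(2) vertices differ only in the omitted edge, so $U(u,v)$ is a clique; and a type-(1) vertex other than $u,v$ and a type-(2) vertex other than $u,v$ have edge-sets differing in four edges (the extra edge of the first, the omitted $C$-edge of the second, and $a,b$), hence are non-adjacent. Consequently any clique of $G(P)$ containing the edge $(u,v)$ cannot contain both a type-(1) and a type-(2) vertex outside $\{u,v\}$, so it lies in $I(u,v)$ or in $U(u,v)$. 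Therefore every max-clique through $(u,v)$ equals $I(u,v)$ or $U(u,v)$ (these sets being max-cliques whenever they are maximal; the only relevant exception is the degenerate case $I(u,v)=\{u,v\}$), and in particular there are at most two of them.

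It remains to show that three vertices of a max-clique determine its type. Let $w_1,w_2,w_3$ be distinct vertices of a max-clique $Q$. Applying the previous paragraph to the edge $(w_1,w_2)$, the vertex $w_3$ is of type~(1) or of type~(2) with respect to $w_1,w_2$, and—being distinct from $w_1,w_2$—of exactly one of the two. If it is of type~(1) then $\bar w_1\cap\bar w_2\subseteq\bar w_3$, so $m:=|\bar w_1\cap\bar w_2\cap\bar w_3|=|\bar w_1\cap\bar w_2|=n-2$; if it is of type~(2) then $\bar w_3=(\bar w_1\cup\bar w_2)\setminus\{e\}$ with $e\in\bar w_1\cap\bar w_2$, so $m=n-3$. (By inclusion-exclusion, using $|\bar w_i|=n-1$ and $|\bar w_i\cap\bar w_j|=n-2$, this is the same as reading $|\bar w_1\cup\bar w_2\cup\bar w_3|=m+3$, i.e.\ $n+1$ versus $n$, which is where the names ``intersection'' and ``union'' come from.) Since $n-2\neq n-3$, the value $m$—which any triple of vertices of $Q$ yields—determines whether $Q$ equals $I(w_1,w_2)$ or $U(w_1,w_2)$, hence pins down the type and incidentally shows it is independent of the chosen triple. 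I do not expect a genuine obstacle here: the statement is bookkeeping on top of the preceding case analysis, and the only thing to keep an eye on is the degenerate two-vertex max-cliques, for which the second assertion is vacuous.
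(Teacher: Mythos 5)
Your proposal is correct and follows essentially the same route as the paper: the dichotomy for common neighbours of $u,v$, the observation that any two vertices of the same type are adjacent while a type-(1) and a type-(2) vertex (outside $\{u,v\}$) differ in four edges and hence are non-adjacent, and the resulting classification of max-cliques through $(u,v)$. Your explicit invariant $|\bar w_1\cap\bar w_2\cap\bar w_3|\in\{n-2,n-3\}$ is a clean way to make the ``three vertices determine the type'' claim precise, but it is just a quantified restatement of the paper's implicit argument.
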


\begin{rem}
For ease of notation, we call $I(u,v)$ an ``intersection max-clique'' even if $I(u,v)=\{u,v\}$, i.e., it contains only
two vertices. This is a slight abuse of notation, since in such a case, $I(u,v)$ may be properly contained in $U(u,v)$,
and thus, not be a max-clique by the definition above. Similarly, we call $U(u,v)$ a ``union max-clique'' even if
$|U(u,v)|=2$.
\end{rem}

\noindent Now we present a geometric characterization of the two types of max-cliques.

\medskip \noindent \textbf{Intersection max-clique.} Given two neighbors $u,v \in V(G(P))$, the intersection $\bar{u} \cap \bar{v}$
can be viewed as a disjoint union of two simple paths $(x_1,x_2,\ldots,x_k)$, $(y_1,y_2,\ldots,y_{\ell})$, where $\{x_i\},\{y_j\} \subset P$,
$1 \leq k,\ell \leq n-1$, and $k+\ell=n$. Each element in $I(u,v)$, including $u$ and $v$, is obtained from $\bar{u} \cap \bar{v}$ by
adding one of the four edges $(x_k,y_\ell),(x_k,y_1),(x_1,y_{\ell}),(x_1,y_1)$, such that the resulting path is non-crossing.
If none of these four edges crosses edges of $\bar{u} \cap \bar{v}$, we get $|I(u,v)|=4$ (see Figure~\ref{fig:inclique-4}). If (w.l.o.g.) $(x_1,y_1)$
crosses $e \in \bar{u} \cap \bar{v}$, w.l.o.g. $e=(x_j,x_{j+1})$, then $(x_1,y_{\ell})$ also crosses $e$ (since all the path
$(y_1,\ldots,y_{\ell})$ lies on the same side of $e$) and then $|I(u,v)|=2$ (see Figure~\ref{fig:Uclique-2}).

\begin{figure}[tb]
 \centering
    \centering
  			\includegraphics[width=.4\columnwidth, page=3]{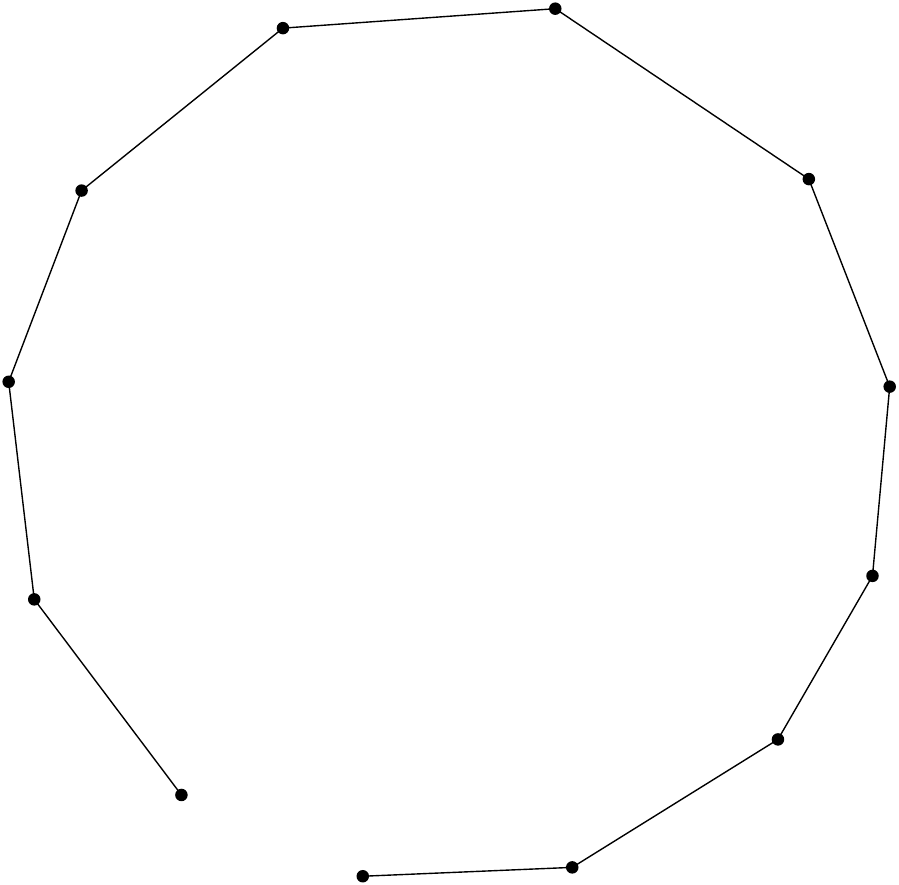}
 \hspace{0.3cm}
    \centering
 				\includegraphics[width=.4\columnwidth, page=4]{Uclique_n.pdf}
 \caption{These two Hamiltonian paths are neighbors in $G(P)$, and are included in an \textit{intersection-clique} of size $4$, and in a \textit{union-clique} of size~2 (only these two paths).}
 \label{fig:inclique-4}
\end{figure}

\begin{figure}
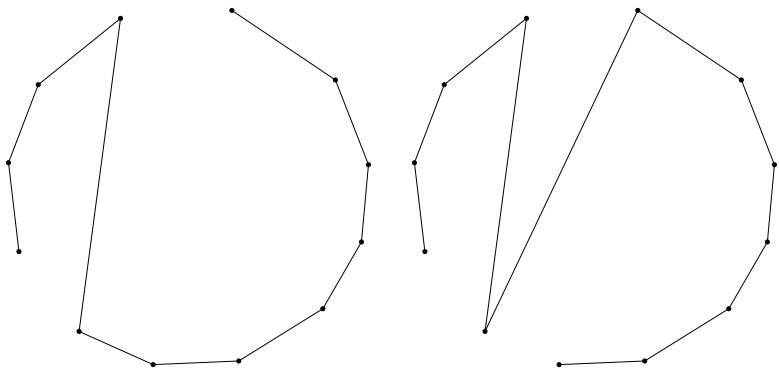

 \centering
    \centering
  			\includegraphics[width=.4\columnwidth, page=5]{Uclique_n.pdf}
 \hspace{0.3cm}
    \centering
 				\includegraphics[width=.4\columnwidth, page=6]{Uclique_n.pdf}
 \caption{These two Hamiltonian paths are neighbors in $G(P)$, and generate a maximal clique of size $2$. I.e., the \textit{intersection-clique} is identical to the \textit{union-clique}, both of size $2$.}
 \label{fig:Uclique-2}
\end{figure}

\medskip \noindent \textbf{Union max-clique.} Given two neighbors $u,v \in V(G(P))$, where $P(u)=(x_1,x_2,\ldots,x_n)$, it is easy
to see that $\bar{u} \cup \bar{v}$ is either of the form $\bar{u} \cup \{(x_1,x_j)\}$ where $2<j \leq n$, or of the form
 $\bar{u} \cup \{(x_j,x_n)\}$ where $1 \leq j<n-1$. Assume w.l.o.g. the former holds.

Each element in $U(u,v)$, including $u$ and $v$,
is obtained from $\bar{u} \cup \bar{v}$ by removing an edge, such that the resulting graph is a non-crossing spanning path.
We distinguish between two cases:

\begin{itemize}
\item If $\bar{u} \cup \bar{v} = \bar{u} \cup \{(x_1,x_n)\}$ then the edge $(x_1,x_n)$ crosses at most one edge of $\bar{u}$ (as
otherwise, $P(v)$ cannot be non-crossing). If $(x_1,x_n)$ crosses $e \in \bar{u}$, then we must have $\bar{v}=\bar{u} \cup \{(x_1,x_n)\}
\setminus \{e\}$ and $|U(u,v)|=2$. If $(x_1,x_n)$ does not cross any edge of $\bar{u}$ then $\bar{u} \cup \bar{v}$ is the boundary
of $\mathrm{Conv}(P)$, and thus $|U(u,v)|=n$ (see Figure~\ref{fig:Uclique-n}).

\item If $\bar{u} \cup \bar{v} = \bar{u} \cup \{(x_1,x_j)\}$ for $j<n$ then we must have $\bar{v}=\bar{u} \cup \{(x_1,x_j)\}
\setminus \{(x_{j-1},x_j)\}$ and then $|U(u,v)|=2$.
\end{itemize}

\begin{figure}[bt]
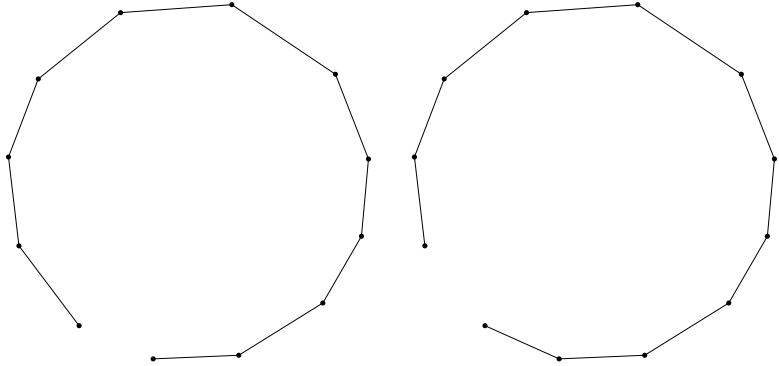

 \centering
    \centering
  			\includegraphics[width=.4\columnwidth, page=1]{Uclique_n.pdf}
 \hspace{0.3cm}
    \centering
 				\includegraphics[width=.4\columnwidth, page=2]{Uclique_n.pdf}
 \caption{These two Hamiltonian paths are neighbors in $G(P)$, and are included in an \textit{intersection-clique} of size $2$  and in a \textit{union-clique} of size $n$.}
 \label{fig:Uclique-n}
\end{figure}

\medskip \noindent Summarizing the above, we have the following.

\begin{cor} \label{cor:clique size}
Let $P$ be a set of $n \geq 5$ points in convex position in the plane, and let $G(P)$ be the path graph of $P$.
Then:
\begin{itemize}
\item Each intersection max-clique of $G(P)$ is either of size 2 or 4.

\item Among the union max-cliques, all are of size 2 except for a single max-clique of size $n$, in which each vertex represents
a boundary path that contains all edges of $\B(P)$ except for one.
\end{itemize}
\end{cor}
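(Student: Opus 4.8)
The statement is essentially a bookkeeping summary of the case analysis that immediately precedes it, so the plan is to collect those observations into a clean argument. The plan is to treat the two bullets separately, in each case arguing (a) the claimed sizes are the only ones that occur, and (b) for the union case, that exactly one max-clique of size $n$ exists and it has the described structure.

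\textbf{Intersection max-cliques.} First I would recall the decomposition of $\bar u \cap \bar v$ (for neighbors $u,v$) into two vertex-disjoint simple paths $(x_1,\dots,x_k)$ and $(y_1,\dots,y_\ell)$ with $k+\ell=n$. Every element of $I(u,v)$ arises by adjoining exactly one of the four ``endpoint-connecting'' edges $(x_k,y_\ell),(x_k,y_1),(x_1,y_\ell),(x_1,y_1)$ so that the result is a non-crossing spanning path. I would then observe that the number of these four edges that avoid crossing $\bar u\cap\bar v$ is either $4$ or $2$: if all four are non-crossing we get $|I(u,v)|=4$; if, say, $(x_1,y_1)$ crosses some edge $e=(x_j,x_{j+1})\in\bar u\cap\bar v$, then since the entire $y$-path lies on one side of $e$, the edge $(x_1,y_\ell)$ crosses $e$ as well, while $(x_k,y_1)$ and $(x_k,y_\ell)$ remain admissible (they coincide with $\bar u$ and $\bar v$ respectively), giving $|I(u,v)|=2$. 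The only subtlety to check is that the crossing can never knock out exactly one or exactly three of the four edges, which is exactly the ``same side of $e$'' observation; I would also note $|I(u,v)|\ge 2$ always, since $u,v\in I(u,v)$.

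\textbf{Union max-cliques.} Here I would use that for neighbors $u,v$ with $P(u)=(x_1,\dots,x_n)$, the set $\bar u\cup\bar v$ is $\bar u$ together with one extra edge incident to a leaf of $P(u)$, i.e.\ of the form $\bar u\cup\{(x_1,x_j)\}$ or $\bar u\cup\{(x_j,x_n)\}$; assume the former. If $j<n$, removing an edge to leave a non-crossing spanning path forces the removed edge to be $(x_{j-1},x_j)$, so $U(u,v)=\{u,v\}$ and $|U(u,v)|=2$. If $j=n$, then $(x_1,x_n)$ crosses at most one edge of $\bar u$ (else $P(v)$ would not be simple); if it crosses some $e\in\bar u$ then again $\bar v=\bar u\cup\{(x_1,x_n)\}\setminus\{e\}$ is forced and $|U(u,v)|=2$; if it crosses nothing, then $\bar u\cup\bar v=\B(P)$ is the full boundary cycle, and deleting any one of its $n$ edges yields a (boundary) Hamiltonian path, so $|U(u,v)|=n$ and each vertex of this clique represents a boundary path missing exactly one edge of $\B(P)$. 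Finally I would argue uniqueness of the size-$n$ union max-clique: any union max-clique of size $>2$ must, by the above dichotomy, have $\bar u\cup\bar v=\B(P)$, and since the vertex set of $U(u,v)$ is then exactly $\{$boundary paths obtained by deleting one edge of $\B(P)\}$, it is independent of the particular pair $u,v$ chosen, hence there is a single such clique. (One should also check it really is maximal, i.e.\ not contained in a larger clique — but any neighbor common to three of these boundary paths must itself omit one edge of $\B(P)$, so it already lies in the clique.)

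\textbf{Main obstacle.} None of the steps is deep; the only place that needs care is making the intersection-clique dichotomy airtight — specifically, verifying that a crossing between one of the four candidate edges and $\bar u\cap\bar v$ always eliminates precisely two of the four candidates (never one or three), which hinges on the convex-position fact that a path disjoint from a chord $e$ lies entirely on one side of $e$. I would state this as a small sub-claim and dispatch it with the ``same side of $e$'' argument. Everything else is the routine assembly of the preceding case analysis, so the corollary follows.
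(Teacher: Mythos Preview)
Your proposal is correct and follows essentially the same approach as the paper: the corollary there is stated without a separate proof, as a direct summary of the preceding case analysis (the four endpoint-connecting edges for $I(u,v)$ with the ``same side of $e$'' observation, and the $j<n$ versus $j=n$ dichotomy for $U(u,v)$), which is exactly what you reproduce. Your added remarks on uniqueness and maximality of the size-$n$ union clique are a nice explicit touch but are implicit in the paper's discussion.
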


\section{The Automorphism Group of $G(P)$}
\label{sec:main}

In this section we show that given $G(P)$ as an abstract graph, we can recover all edges of
each path represented by a vertex of $G(P)$, up to an automorphism of $K(P)$ as a geometric
graph. This clearly implies that $\mathrm{Aut}(G(P))$ is the dihedral group of order $2n$.
(For sake of completeness, we prove this easy implication at the end of the section.)

\medskip \noindent The proof proceeds in three steps:
\begin{enumerate}
\item We detect all vertices of $G(P)$ that represent boundary paths. Namely, we find an ordered subset of $n$
vertices of $G(P)$ with a bijection between them and the boundary edges of $K(P)$, fixed up to an automorphism of $K(P)$ as a geometric graph.

\item We divide all vertices of $G(P)$ into levels according to their distance from the family of
boundary paths, and use the identification of boundary paths to recover uniquely all boundary
edges of each path represented by a vertex of $G(P)$.

\item We use the relation between vertices at adjacent levels to recover uniquely all
diagonals of each path represented by a vertex of $G(P)$.
\end{enumerate}

\subsection{Identification of a ``copy'' of the boundary of $\mathrm{Conv}(P)$ inside $G(P)$}
\label{sec:sub:boundary}

As shown in
Section~\ref{sec:maxclique},
$\mathcal{B}$, the set of vertices of $G(P)$ that represent boundary paths,
is a unique max-clique of size $n$ in $G(P)$. This is already a sufficient identification of $\B$ as a set, (i.e., without order),  but for sake of obtaining an efficient algorithm for the reconstruction problem, we suggest here an alternative identification of $\B$ as a set,
based on the fact that $\B$ is exactly the set of vertices of maximum degree in $G(P)$:

\begin{clm}\label{claim:deg}
For any $v \in \B$, $\mathrm{deg}_{G(P)}(v)=3n-7$, and for any $u \in V(G(P)) \setminus \B$, $\mathrm{deg}_{G(P)}(u)<3n-7$.
\end{clm}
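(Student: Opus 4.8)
The plan is to compute $\mathrm{deg}_{G(P)}(v)$ by counting the legal single‑edge swaps out of $P(v)$. Write $P(v)=(x_1,\dots,x_n)$. Every neighbour $w$ of $v$ satisfies $\bar w=(\bar v\setminus\{e\})\cup\{f\}$ where $e$ is the unique edge of $\bar v\setminus\bar w$ and $f$ the unique edge of $\bar w\setminus\bar v$ (recall $|\bar v\triangle\bar w|=2$, $|\bar v|=|\bar w|=n-1$); hence distinct neighbours correspond to distinct pairs $(e,f)$ and $\mathrm{deg}_{G(P)}(v)$ equals the number of pairs $(e,f)$ with $e\in\bar v$, $f\notin\bar v$, for which $(\bar v\setminus\{e\})\cup\{f\}$ is a non‑crossing spanning path of $P$. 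First I would record a purely combinatorial upper bound on the number of \emph{candidate} pairs: if $e$ is a leaf edge, say $e=(x_1,x_2)$, then $\bar v\setminus\{e\}$ is the isolated vertex $x_1$ together with the path $(x_2,\dots,x_n)$, so the only $f\neq e$ that can yield a spanning path is $f=(x_1,x_n)$ — at most one candidate. If $e=(x_i,x_{i+1})$ is an internal edge ($2\le i\le n-2$), then $\bar v\setminus\{e\}$ splits into $(x_1,\dots,x_i)$ and $(x_{i+1},\dots,x_n)$, and $f$ must join an endpoint of the first sub‑path to an endpoint of the second; the four possibilities are $(x_1,x_{i+1}),(x_1,x_n),(x_i,x_{i+1}),(x_i,x_n)$, and the third one is $e$ itself, leaving at most three candidates. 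By Observation~\ref{Obs:Basic}(1) there are exactly two leaf edges and $n-3$ internal edges, so
$$\mathrm{deg}_{G(P)}(v)\le 2\cdot 1+(n-3)\cdot 3=3n-7\qquad\text{for every }v\in V(G(P)).$$

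For $v\in\B$ I would show all $3n-7$ candidates are realised. Take $P(v)=(1,2,\dots,n)$, the boundary path omitting the boundary edge $(n,1)$. Deleting a leaf edge $(1,2)$ and adding $(1,n)$ produces the boundary path omitting $(1,2)$ (symmetrically for $(n-1,n)$); deleting an internal edge $(i,i+1)$ and adding $(1,n)$ produces the boundary path omitting $(i,i+1)$; and deleting $(i,i+1)$ and adding the diagonal $(1,i+1)$ (resp.\ $(i,n)$) produces a path all of whose remaining edges are boundary edges, each lying weakly on one side of the added diagonal, hence non‑crossing. All these edges $f$ are genuinely outside $\bar v$, so every candidate gives a neighbour and $\mathrm{deg}_{G(P)}(v)=3n-7$.

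For $u\notin\B$ the point is to exhibit many \emph{failing} candidates. Let $P(u)=(x_1,\dots,x_n)$. By Observation~\ref{Obs:Basic}(2) the leaves $x_1,x_n$ are not boundary‑neighbours, so $(x_1,x_n)$ is a diagonal and is not an edge of $P(u)$. If $(x_1,x_n)$ crossed no edge of $P(u)$, then $P(u)\cup\{(x_1,x_n)\}$ would be a non‑crossing Hamiltonian cycle on the convex point set $P$, forcing it to be the boundary of $\mathrm{Conv}(P)$ and hence $P(u)$ a boundary path — contradiction. So $(x_1,x_n)$ crosses some edge of $P(u)$; since it shares an endpoint with each of the two leaf edges, that edge is an internal edge $e^\ast=(x_j,x_{j+1})$. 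Now for every internal edge $(x_i,x_{i+1})$ with $i\neq j$, the candidate $f=(x_1,x_n)$ fails, because the resulting graph still contains $e^\ast$, which $(x_1,x_n)$ crosses. Thus at least $(n-3)-1=n-4$ of the candidate pairs counted above are invalid, so
$$\mathrm{deg}_{G(P)}(u)\le (3n-7)-(n-4)=2n-3<3n-7\qquad\text{for }n\ge 5.$$

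The main obstacles are of a routine‑but‑careful nature rather than conceptual: making the $(e,f)\leftrightarrow$ neighbour correspondence watertight so the count has no overcounting, and verifying that the candidate swaps listed for boundary paths really are non‑crossing (a short case check in the convex polygon). The only external fact used is that a Hamiltonian cycle on points in convex position is non‑crossing iff it is the hull boundary, which is standard.
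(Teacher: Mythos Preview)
Your argument is correct and follows the same counting framework as the paper: enumerate the pairs $(e,f)$ giving valid swaps, obtain the universal bound $3n-7$, realise it for boundary paths, and rule out enough candidates for non-boundary paths. The only difference is in \emph{which} candidates you eliminate for $u\notin\B$. The paper observes that the two endpoints of $P(u)$ lie on opposite sides of any diagonal $e\in P(u)$, so deleting either leaf edge leaves only $(x_1,x_n)$ as a replacement and this crosses $e$; hence both leaf-edge swaps fail and $\deg(u)\le 3(n-3)=3n-9$. You instead fix the crossing edge $e^\ast$ and kill the $n-4$ internal-edge swaps that use $f=(x_1,x_n)$ but keep $e^\ast$, obtaining $\deg(u)\le 2n-3$. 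Both routes rest on the same geometric fact (that $(x_1,x_n)$ crosses some diagonal of $P(u)$), and both inequalities are strict for $n\ge 5$; your bound is in fact the tighter one for $n\ge 7$, while the paper's is tighter for $n=5$. Incidentally, your own argument also shows the two leaf-edge swaps fail (the resulting path still contains $e^\ast$), so you could have subtracted two more and written $\deg(u)\le 2n-5$.
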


\begin{proof}
Let $v \in \B$. Any neighbor of $v$ in $G(P)$ represents a simple Hamiltonian path, obtained from $P(v)$ by deleting an edge and replacing it with another edge. If the deleted edge is a leaf edge of $P(v)$, only one neighbor of $v$ is obtained, and if the deleted edge is an internal edge of $P(v)$, then three neighbors of $v$ are obtained. Indeed, note that deletion of an internal edge transforms $P(v)$ into two boundary paths of total length $n-2$. There are four options to add an edge that will connect these paths into a single Hamiltonian path. Since $P(v)$ is a boundary path, all of them constitute simple paths. Exactly one of them is the original path $P(v)$, and so, deletion of any internal edge contributes 3 neighbors of $v$.
Hence, $$\mathrm{deg}_{G(P)}(v)=3(n-3)+2=3n-7.$$

On the other hand, let $u \in V(G(P)) \setminus \B$. By the definition of $\B$, $P(u)$ contains a diagonal $e$, and the two endpoints of $P(u)$ are located on different sides of $e$. As above, any neighbor of $u$ in $G(P)$ represents a simple Hamiltonian path, obtained from $P(u)$ by deleting an edge and replacing it with another edge. If the deleted edge is a leaf edge of $P(u)$, then after the deletion we are left with
a boundary path of length $n-2$ and an isolated vertex. The new edge replacing the removal boundary edge has to connect the isolated vertex to one of the leaves of the boundary path. However, for one of the two leaves, this edge crosses $e$ and so cannot be added. For the other leaf, we return to the original path $P(u)$.
Hence, $u$ has no neighbor in $G(P)$ that is obtained by deleting a leaf edge of $P(u)$. Furthermore, by deleting an internal edge of $P(u)$, at most three neighbors of $u$ can be obtained, as above, and thus $\mathrm{deg}_{G(P)}(v)<3n-7$.
\qed
\end{proof}

Now, after identifying $\B$ as a subset of $V(G(P))$, note that each $v \in \mathcal{B}$ can be represented
by the unique boundary edge of $P$ that is not contained in $P(v)$, which we denote by $e_v$.
In order to determine (to the extent possible) what is the boundary edge $e_v$ that corresponds to $v$,
and thus to identify a copy of the set of boundary edges of $K(P)$ in $G(P)$, we use the following observation.

\begin{observation}\label{Obs:Sharing a vertex}
Let $u,v \in \B$. The edges $e_u$ and $e_v$ share a vertex if and only if $(u,v)$ is not contained in a maximal clique of size $4$ in $G(P)$.
\end{observation}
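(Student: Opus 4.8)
The plan is to prove both directions by analyzing the structure of the intersection max-cliques incident to the edge $(u,v)$, using the characterization from Section~\ref{sec:maxclique}. First I would recall that for $u,v \in \B$, the path $P(u)$ consists of all boundary edges except $e_u$, and similarly for $v$; in particular $\bar u \cap \bar v$ is obtained from the boundary cycle of $\mathrm{Conv}(P)$ by deleting both $e_u$ and $e_v$. If $e_u = e_v$ then $u=v$, so we may assume $e_u \neq e_v$, and then $\bar u \cap \bar v$ is a disjoint union of two sub-paths of the boundary, call them $(x_1,\dots,x_k)$ and $(y_1,\dots,y_\ell)$ with $k+\ell = n$, where the endpoints of $e_u$ are a pair of the form $(x_i, x_{i+1})$-gap and $e_v$ the other gap. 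Since $(u,v) \in E(G(P))$, adding exactly one of the four ``corner'' edges $(x_1,y_1),(x_1,y_\ell),(x_k,y_1),(x_k,y_\ell)$ recovers $P(u)$ and another recovers $P(v)$; concretely the two missing boundary edges $e_u,e_v$ are each one of these corner chords.

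The key computation is: when do the four corner edges $(x_1,y_1),(x_1,y_\ell),(x_k,y_1),(x_k,y_\ell)$ all fail to cross any edge of $\bar u \cap \bar v$? By Corollary~\ref{cor:clique size} and the discussion of intersection max-cliques, $|I(u,v)| = 4$ precisely in this case, and $|I(u,v)| = 2$ otherwise. So I need to show: all four corner edges avoid crossing $\bar u \cap \bar v$ if and only if $e_u$ and $e_v$ do \emph{not} share a vertex. The natural way to see this is to place the $n$ points on the boundary cycle and observe that $e_u$ and $e_v$ are two of the corner chords. If $e_u$ and $e_v$ share a vertex, say both are incident to $x_1 = y_1$ (the two boundary gaps are adjacent), then one of the two arcs $(x_1,\dots,x_k)$ or $(y_1,\dots,y_\ell)$ is actually a single point, or more precisely two of the four corner endpoints coincide; then among the four corner chords, one of them is a chord from that shared vertex across to the far end of the long arc, and a short check (exactly as in the paragraph preceding Figure~\ref{fig:Uclique-2}) shows it must cross an interior boundary edge of the long arc, forcing $|I(u,v)| = 2$. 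Conversely, if $e_u$ and $e_v$ are vertex-disjoint, the four points $x_1,x_k,y_1,y_\ell$ are four distinct points in convex position forming the ``ends'' of the two arcs, and each of the four corner chords together with the two arcs bounds a convex region, so none of them crosses an edge of either arc; hence $|I(u,v)| = 4$.

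I would organize this as: (i) reduce to $e_u \neq e_v$ and set up the two-arc decomposition of $\bar u \cap \bar v$; (ii) identify $e_u$ and $e_v$ among the four corner chords (one corner chord per arc-pairing); (iii) in the vertex-sharing case, exhibit a corner chord that crosses an interior edge of the longer arc and invoke the ``$|I(u,v)|=2$'' half of the intersection-clique analysis; (iv) in the vertex-disjoint case, argue by convex position that no corner chord crosses $\bar u \cap \bar v$, so $|I(u,v)| = 4$; then translate ``$(u,v)$ lies in an intersection max-clique of size $4$'' into ``$(u,v)$ lies in a maximal clique of size $4$'' using Corollary~\ref{cor:clique size} (union max-cliques through boundary vertices have size $2$ or $n$, and for $n \geq 5$ a size-$n$ union clique is not size $4$, while size-$2$ cliques are irrelevant). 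The main obstacle I expect is the bookkeeping in step (iii): I must be careful about the degenerate case where an arc degenerates to a single vertex (when $e_u,e_v$ share a vertex and the two boundary gaps are consecutive, one arc has length $0$), making sure the ``corner chord crosses an interior edge'' argument still goes through, possibly by treating the sub-case $k=1$ (or $\ell=1$) separately and noting that then two of the four corner chords coincide and the remaining distinct chord spans the long arc and crosses one of its edges whenever $n \geq 5$.
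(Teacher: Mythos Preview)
Your overall strategy matches the paper's: analyze $I(u,v)$ via the two-arc decomposition of $\bar u\cap\bar v$, and use Corollary~\ref{cor:clique size} to rule out union cliques of size~$4$. The vertex-disjoint direction (your step (iv)) is fine and is exactly what the paper does.

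The gap is in the vertex-sharing direction (your step (iii) and the ``main obstacle'' paragraph). When $e_u$ and $e_v$ share a vertex $x$, one of the two arcs is the single point $\{x\}$ and the other is the boundary path $S$ of length $n-2$. In that situation the ``four corner chords'' collapse not to three but to \emph{two} distinct edges, namely $(x,\text{first endpoint of }S)$ and $(x,\text{second endpoint of }S)$, and these are precisely the boundary edges $e_u$ and $e_v$. Neither of them crosses any edge of $S$; there is no ``chord from the shared vertex across to the far end of the long arc'' that crosses an interior edge, because both endpoints of $S$ are neighbours of $x$ on the boundary cycle. So the crossing mechanism you invoke from the paragraph before Figure~\ref{fig:Uclique-2} simply does not apply here. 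The reason $|I(u,v)|=2$ is purely combinatorial: there are only two possible connecting edges, not four. This is exactly how the paper argues it: $P(u)\cap P(v)$ is a path $S$ plus an isolated vertex $x$, so the only completions are the two edges from $x$ to an endpoint of $S$, giving back $u$ and $v$.

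Once you replace your crossing argument in step~(iii) by this degeneracy observation, the proof goes through and coincides with the paper's.
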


\begin{proof}
If $e_u \cap e_v = \{x\}$ then $P(u) \cap P(v)$ is a two-component forest in which one component is a
boundary path $S$ of length $n-2$ and the other component is $\{x\}$. In such a case, each element of $I(u,v)$
is obtained by adding to $P(u) \cap P(v)$ an edge that connects $x$ to an endpoint of $S$. Hence, the
only elements of $I(u,v)$ are $u$ and $v$.
On the other hand, from Corollary~\ref{cor:clique size}, $|U(u,v)|\neq 4$, and therefore $(u,v)$ is not contained in any maximal clique of size $4$.

If $e_u \cap e_v = \emptyset$, then $P(u) \cap P(v)$ is a two-component forest in which the components are
boundary paths $S,S'$ of length $\geq 1$, i.e., contain at least two vertices of $P$.
In such a case, there are four different edges connecting an endpoint
of $S$ to an endpoint of $S'$, and hence, $|I(u,v)|=4$.
\qed
\end{proof}

Observation~\ref{Obs:Sharing a vertex} allows identifying a ``copy'' of the boundary of $\mathrm{Conv}(P)$ in $G(P)$, as follows. Define a graph
whose vertex set is $\B$, such that $v,w \in \B$ are connected by an edge if and only if $e_v,e_w$ share a single
vertex. Clearly, the resulting graph is a cycle of length $n$. Identify this cycle with the boundary of $\mathrm{Conv}(P)$,
in such a way that each boundary edge $e$ corresponds
to some $v \in \B$, and each $x \in P$ corresponds to a pair $\{v,w\}$ such that $e_v \cap e_w = \{x\}$. Note that
the identification is fixed only up to an automorphism of $K(P)$ as a geometric graph. However, this is clearly
best possible, since any automorphism of $K(P)$ induces an automorphism of $G(P)$.

\subsection{Recovery of the boundary edges of each path}
\label{sec:sub:levels}

We divide the vertices of $G(P)$ into {\it levels} according to the number of diagonals they contain.
\begin{notn}
For $v \in V(G(P))$, the \emph{level} of $v$ is $\ell(v)=|\D(v)|$.
\end{notn}

The following observation shows that the levels of the vertices can be recovered from $G(P)$.
This observation was made in Lemma~3.2 of~\cite{AIM07} in order to show that the diameter of $G(P)$ is
at most $2n-5$. For sake of completeness, we also give a simple proof here.

\begin{observation}\label{Obs:Level-recovery}
For each $v \in V(G(P))$, we have $\ell(v)=\mathrm{dist}(v,\B)$.
\end{observation}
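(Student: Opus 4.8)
The plan is to prove the two inequalities $\mathrm{dist}(v,\B) \le \ell(v)$ and $\mathrm{dist}(v,\B) \ge \ell(v)$ separately, so as to sandwich $\mathrm{dist}(v,\B)$ between $\ell(v)$ and $\ell(v)$.

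For the upper bound $\mathrm{dist}(v,\B) \le \ell(v)$, the idea is to show that from any vertex $v$ with $\ell(v) = k \ge 1$ one can move, in a single step of $G(P)$, to a vertex $v'$ with $\ell(v') = k-1$; iterating $k$ times lands in $\B$. So suppose $P(v)$ contains a diagonal $d = (a,b)$. Since $P(v)$ is non-crossing, one of the two arcs of the boundary determined by $a,b$ together with the chord $d$ bounds a region that contains no other vertex of $P$ "cut off" — more carefully, among all diagonals of $P(v)$ one can pick a diagonal $(a,b)$ that is \emph{innermost} on one side, meaning that the shorter boundary arc between $a$ and $b$ contains no endpoint of any other diagonal of $P(v)$. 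Deleting $(a,b)$ splits $P(v)$ into two sub-paths; one of these sub-paths lies entirely along that short boundary arc and hence consists only of boundary edges, with $a$ (say) as one of its endpoints and a boundary-neighbor $c$ of $a$ as the other. Then re-adding the boundary edge $(a,c)$… — here I need to be a little careful that the reconnection actually reduces the diagonal count; the clean way is: delete a diagonal whose removal creates two sub-paths $S_1, S_2$ such that joining an endpoint of $S_1$ to an endpoint of $S_2$ by a \emph{boundary} edge yields a non-crossing path. Such a reconnection exists because the outer face of the planar drawing of $P(v)$ is bounded by boundary edges and exactly two "bridging" positions, and at least one admissible boundary edge is available once we choose the diagonal to be extreme. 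This new path $P(v')$ has one fewer diagonal, so $\ell(v') = \ell(v)-1$, and $(v,v') \in E(G(P))$. The main obstacle in this direction is making the "extreme diagonal + available boundary reconnection" argument fully rigorous; I expect to invoke the convex-position structure (Observation~\ref{Obs:Basic}) and a short case analysis on where the leaves of $P(v)$ sit.

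For the lower bound $\mathrm{dist}(v,\B) \ge \ell(v)$, the key point is that one step in $G(P)$ changes $\ell$ by at most $1$. Indeed, if $(w,w') \in E(G(P))$, then $\bar{w}$ and $\bar{w'}$ differ in exactly one deleted edge and one added edge, so $|\D(w)| = |\bar{w} \cap \text{(diagonals)}|$ and $|\D(w')|$ differ by at most $1$ (deleting/adding a single edge changes the number of diagonals by at most one). Hence along any path in $G(P)$ from $v$ to a vertex $u \in \B$ of length $\mathrm{dist}(v,\B)$, the quantity $\ell$ goes from $\ell(v)$ down to $\ell(u) = 0$ in steps of size at most $1$, forcing $\mathrm{dist}(v,\B) \ge \ell(v)$. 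Combining the two bounds gives $\mathrm{dist}(v,\B) = \ell(v)$, which is exactly the claim. \qed

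I expect the first (upper-bound) direction to be the real work; the lower bound is essentially a one-line Lipschitz observation about how $\ell$ changes along edges of $G(P)$.
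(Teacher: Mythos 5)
Your overall architecture (two inequalities; the lower bound via the fact that one step of $G(P)$ changes $\ell$ by at most one; the upper bound via a single move that decreases $\ell$ by one) is the same as the paper's, and the lower bound is fine. But the upper bound is the entire content of the observation, and there you have both a concrete misstep and an acknowledged unproven step. The misstep: after deleting your innermost diagonal $(a,b)$, the component that lies along the chosen arc is a boundary path running from $a$ to the vertex $u$ of the arc that is the boundary-neighbor of $b$ (not of $a$), and the edge that must be added back is the boundary edge $(u,b)$. The edge $(a,c)$ you propose, with $c$ a boundary-neighbor of $a$, does not work: the only candidate $c$ outside the arc lies on the other side of $(a,b)$ and is in general an \emph{internal} vertex of the second component, so adding $(a,c)$ creates a degree-$3$ vertex; the only new endpoint of the second component produced by the deletion is $b$ itself. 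The unproven step: you never verify that the interior of the chosen arc is traversed by $P(v)$ as a single boundary sub-path attached to exactly one of $a,b$; the appeal to ``the outer face \dots\ and two bridging positions'' is not an argument. This is exactly where Observation~\ref{Obs:Basic} is needed: if the arc's interior vertices were attached at both $a$ and $b$ one gets either a cycle or two leaves of $P(v)$ that are boundary-neighbors of each other (impossible for a non-boundary path), and if attached at neither, $P(v)$ is disconnected. With that case analysis supplied and the reconnecting edge corrected to $(u,b)$, your route can be completed; as written, the level-decreasing move is not established.

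For comparison, the paper's descent step is more local and avoids selecting an extremal diagonal: take a leaf $x$ of $P(v)$ and let $(x,y)$ be the boundary edge at $x$ that is missing from $P(v)$. By Observation~\ref{Obs:Basic}(2), $y$ is an internal vertex of $P(v)$, with path-neighbors $w,z$; for exactly one of them, say $z$, the edge $(y,z)$ separates $x$ from $w$, hence is a diagonal, and $P(v)\cup\{(x,y)\}\setminus\{(y,z)\}$ is a non-crossing spanning path of level $\ell(v)-1$. That version requires essentially no case analysis (and is reused later in the paper, in Observation~\ref{Obs:Boundary-recovery}), so you may prefer to adopt it rather than repair the innermost-diagonal argument.
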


\begin{proof}
It is clear from the definition of $\B$ that $\ell(v)=0$ if and only if $v \in \B$, and that for any $v \in V(G(P))$
we have $\mathrm{dist}(v,\B) \geq \ell(v)$. The inequality $\mathrm{dist}(v,\B) \leq \ell(v)$ will follow by
induction once we show that each $v \in V(G(P))$ with $\ell(v)>0$ has a neighbor $u \in V(G(P))$ with $\ell(u)=\ell(v)-1$.

Consider a leaf $x$ of $P(v)$. Clearly, exactly one of the boundary edges of $K(P)$ that emanate from $x$ is
included in $P(v)$. Denote by $(x,y)$ the boundary edge that is \emph{not} included in $P(v)$. Since $\ell(v)>0$,
$y$ cannot be a leaf of $P(v)$ (see Observation~\ref{Obs:Basic}). Thus, $y$ is adjacent in $P(v)$ to $w,z$. Without loss
of generality, the points $x,w$ lie on different sides of the edge $(y,z)$ as depicted in Figure~\ref{fig:A}.
(Otherwise, $x,z$ must lie on different sides of $(y,w)$.) In such a case, $u$, defined by $P(u)=P(v) \cup \{(x,y)\}
\setminus \{(y,z)\}$, is a neighbor of $v$ in $G(P)$ that satisfies $\ell(u)=\ell(v)-1$.
\begin{figure}
 \centering
	\includegraphics[width=.4\columnwidth]{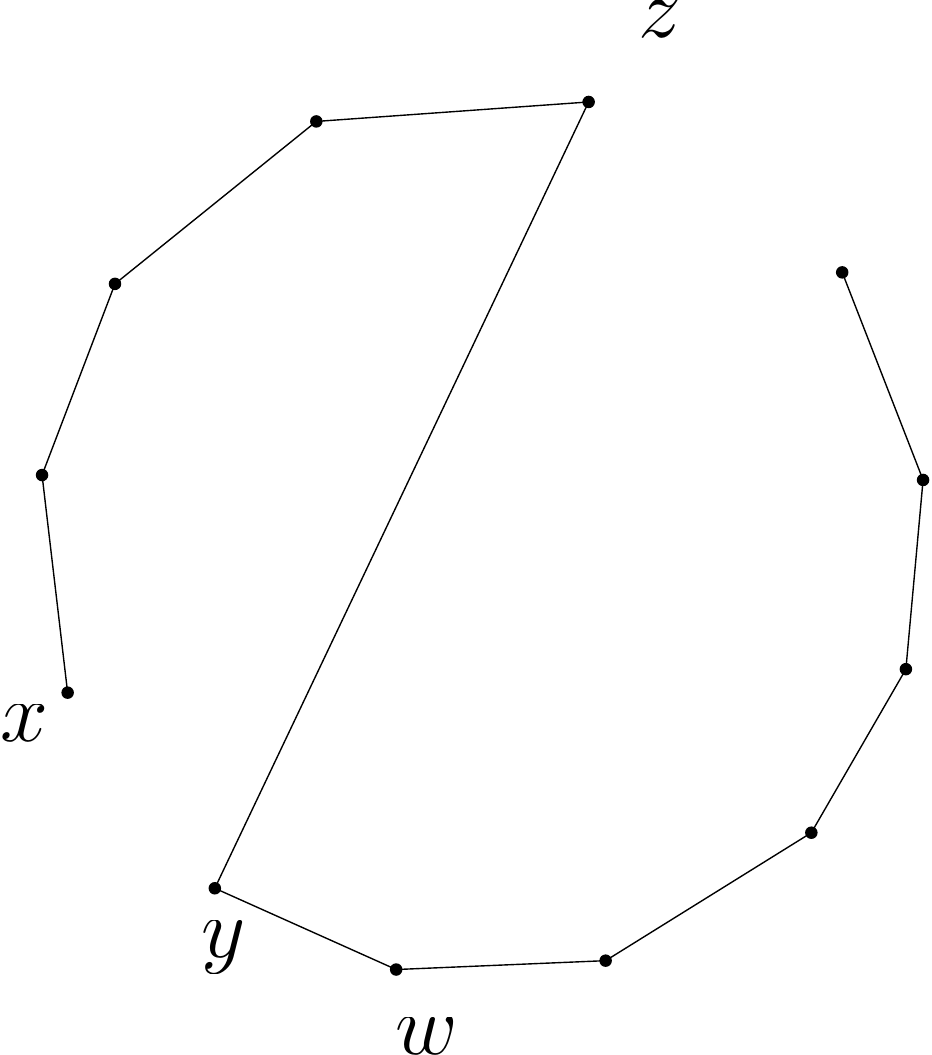}
 \caption{An illustration for the proof of Observation~\ref{Obs:Level-recovery}.}
 \label{fig:A}
\end{figure}
\qed
\end{proof}

For each $v \in V(G(P))$ of level $d$, there are exactly $d+1$ boundary edges that are not contained in $P(v)$.
The following observation shows that these edges can be recovered by observing the elements of $\B$ whose distance
from $v$ is exactly $d$. This observation follows from Lemma~5 of~\cite{CW09}. For sake of completeness, we give
its simple proof here.

\begin{observation}\label{Obs:Boundary-recovery}
Let $v \in V(G(P))$ with $\ell(v)=d$. Let $$\B(P) \setminus \B(v) = \{e_1,e_2,\ldots,e_{d+1}\}.$$ The set
$\{w \in \B: \mathrm{dist}(w,v)=d\}$ has exactly $d+1$ elements, which are the vertices of $\B$ that correspond
to the edges $e_1,e_2,\ldots,e_{d+1}$.
\end{observation}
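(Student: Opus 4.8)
The plan is to prove both the upper bound ($\mathrm{dist}(w,v) \geq d$ for all $w \in \B$) and the characterization of which boundary vertices achieve distance exactly $d$. First I would establish the lower bound: if $w \in \B$ corresponds to the boundary edge $e_w$, then I claim $\mathrm{dist}(w,v) \geq d$ always, and $\mathrm{dist}(w,v) = d$ precisely when $e_w \in \B(P) \setminus \B(v)$. The key monotonicity fact is that along any path in $G(P)$ from $v$ to $w$, the level $\ell(\cdot)$ changes by at most $1$ at each step (since adjacent vertices differ in two edges, so $|\D(\cdot)|$ changes by at most one), hence $\mathrm{dist}(w,v) \geq \ell(v) - \ell(w) = d - 0 = d$ by Observation~\ref{Obs:Level-recovery}. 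So every $w \in \B$ is at distance at least $d$ from $v$.

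Next I would show that the $d+1$ vertices of $\B$ corresponding to $e_1,\ldots,e_{d+1}$ are each at distance exactly $d$. Fix $e_i \in \B(P) \setminus \B(v)$ and let $w_i$ be the corresponding element of $\B$, so $P(w_i) = \B(P) \setminus \{e_i\}$. I need a length-$d$ path in $G(P)$ from $v$ to $w_i$. The idea is to mimic the inductive step in the proof of Observation~\ref{Obs:Level-recovery}, but carefully: at each step we want to both decrease the level by one \emph{and} make progress toward $P(w_i)$, i.e. never touch the edge $e_i$ and never reintroduce a missing boundary edge other than $e_i$. Concretely, the "leaf-flip" move $P(u) = P(v) \cup \{(x,y)\} \setminus \{(y,z)\}$ from that proof adds a boundary edge $(x,y)$ and removes a diagonal $(y,z)$; if we always choose the leaf $x$ and missing boundary edge $(x,y)$ so that $(x,y) \neq e_i$, then after $d$ such steps all $d+1$ missing boundary edges except $e_i$ have been restored and all $d$ diagonals removed, landing us exactly at $w_i$. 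One must check such a choice is always available: among the (at least two) missing boundary edges incident to leaves, we can avoid $e_i$ as long as there is more than one missing boundary edge to restore, which holds precisely while $\ell(v) \geq 1$ and we have not yet restored everything but $e_i$ — this requires a small case check when the two leaf-incident missing boundary edges are "forced," but the flexibility in choosing which leaf of $P(v)$ to work from should resolve it.

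Finally I would argue that \emph{no other} element of $\B$ is at distance exactly $d$ from $v$; equivalently, if $e_w \in \B(v)$ (i.e. $e_w$ is a boundary edge missing from $P(w)$ but present in $P(v)$), then $\mathrm{dist}(w,v) \geq d+1$. Here the monotonicity argument alone gives only $\geq d$, so I need a parity or "potential function" refinement. The natural potential is something like $\Phi(u) = \ell(u) + [\,e_w \in \bar{u}\,]$ — wait, more precisely: consider $\Psi(u) = |\D(u)| + \mathbf{1}[e_w \in P(u)]$. At $v$ we have $\Psi(v) = d + 1$ (since $e_w \in \B(v) \subseteq P(v)$); at $w$ we have $\Psi(w) = 0 + 0 = 0$. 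One checks that a single edge-swap move changes $\Psi$ by at most $1$: the diagonal count changes by at most $1$, and the indicator changes only when the swap involves $e_w$, in which case one can verify the diagonal count and the indicator cannot both drop simultaneously (removing $e_w$ and decreasing $\ell$ would require the added edge to be a diagonal while the removed edge $e_w$ is boundary — but then $\ell$ increases, contradiction; adding $e_w$ forces removal of something, net change bounded). Thus $\mathrm{dist}(w,v) \geq \Psi(v) - \Psi(w) = d+1$, completing the proof.

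The main obstacle I anticipate is the last step — making the potential-function/parity argument fully rigorous, because the interaction between "how the diagonal count changes" and "whether $e_w$ is added or removed" in a single move requires a careful case analysis of the geometric configurations (which boundary edge can be swapped with which diagonal while keeping the path non-crossing). The second step (constructing the length-$d$ witness path avoiding $e_i$) is conceptually routine given the proof of Observation~\ref{Obs:Level-recovery}, but also needs a bit of care to confirm the required leaf/edge choice is always available; I would handle it by induction on $d$, using at each stage the freedom to pick whichever of the two endpoints of $P(v)$ gives a valid flip that does not restore $e_i$.
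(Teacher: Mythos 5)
Your proposal is correct and follows essentially the same route as the paper: the length-$d$ witness paths to the $d+1$ designated vertices of $\B$ are built by exactly the paper's descending leaf-moves from the proof of Observation~\ref{Obs:Level-recovery}, using Observation~\ref{Obs:Basic} to guarantee two non-neighboring leaves and hence two \emph{distinct} candidate boundary edges at every intermediate level, so that $e_i$ can always be avoided. The only point of divergence is the exclusion of the remaining boundary vertices, which the paper treats as immediate (a length-$d$ walk from level $d$ to level $0$ must delete a diagonal and add a boundary edge at every step, so it never removes a boundary edge of $P(v)$); your potential $\Psi(u)=|\D(u)|+\mathbf{1}[e_w\in P(u)]$ makes the same point rigorously, and the case analysis you were worried about is purely combinatorial and immediate -- the indicator can only drop when the deleted edge is $e_w$ itself, a boundary edge, in which case $|\D(\cdot)|$ cannot simultaneously drop, so $\Psi$ decreases by at most $1$ per step.
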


\begin{proof}
It is clear that if $w \in \B$ and $\mathrm{dist}(w,v)=d$, then the only boundary edge not contained in $P(w)$ must be
one of $e_1,e_2,\ldots,e_{d+1}$. On the other hand, let $w \in \B$ be such that $e_i \not \in P(w)$. We claim that there
exists a path of length $d$ in $G(P)$ from $v$ to $w$. By the proof of Observation~\ref{Obs:Level-recovery},
from each $v' \in V(G(P))$ with $\ell(v')>0$ we can move to a neighbor of lower level by choosing a leaf $x$, adding
a boundary edge that emanates from it, and removing another edge. Since each such $v'$ has two leaves that are not
neighboring on $\B(P)$ (see Observation~\ref{Obs:Basic}), at each step there are two possible boundary edges that can
be added. Hence, we can construct a path in which $e_i$ is not added at any step, and thus, is missing also in the
path whose level is $0$. That final path must be $P(w)$.
\qed
\end{proof}

Since the set $\{w \in \B: \mathrm{dist}(w,v)=d\}$ can be detected in $G(P)$, Observation~\ref{Obs:Boundary-recovery}
implies that we can recover $\B(v)$ for all $v \in V(G(P))$.

\subsection{Recovery of the diagonals of each path}
\label{sec:sub:diagonals}

Our next goal is the full recovery of $P(v)$ for any path $v \in V(G(P))$, i.e., determination whether
$(x,y) \in P(v)$ or not for each $(x,y) \in E(K(P))$. We use the following observation.

\begin{observation}\label{Obs:Completion by diagonals}
Let $P_1,P_2,\ldots,P_k$ be disjoint boundary paths, possibly
including degenerate (i.e., single-vertex) paths, that cover - in the aforementioned order - all the vertices of $P$.
There are at most $k$ possible ways to extend
$P_1 \cup P_2 \cup \ldots \cup P_k$ into a simple spanning path $P(v)$ such that $\B(v) = P_1 \cup P_2
\cup \ldots \cup P_k$ by adding $k-1$ diagonals.
\end{observation}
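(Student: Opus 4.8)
The plan is to fix one of the boundary paths, say $P_1 = (a_1, a_2, \ldots, a_{m_1})$, as an anchor and to argue that once we decide how $P_1$ connects to the rest of the structure, the remaining diagonals are forced in a way that leaves only a bounded number of global choices. The key geometric fact is that since $P$ is in convex position and the $P_i$ appear in cyclic (boundary) order, each $P_i$ is a contiguous arc of the convex polygon; a diagonal joining two of these arcs can only be added at the endpoints of the arcs, and a simple spanning path must enter and leave each arc at most once. So each degenerate or non-degenerate arc $P_i$ has at most two ``ports'' (its two endpoints, or its single vertex counted twice), and the $k-1$ diagonals we add form, together with the arcs contracted to single vertices, a spanning path on the $k$ contracted super-vertices.

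First I would make this reduction precise: contract each $P_i$ to a super-vertex $\pi_i$, so that a valid extension corresponds to a Hamiltonian path on $\{\pi_1, \ldots, \pi_k\}$ in which each $\pi_i$ is visited using only its available ports, and — crucially — the resulting path is non-crossing in the plane. Second, I would use convexity to pin down which orderings of $\pi_1, \ldots, \pi_k$ can possibly yield a \emph{non-crossing} spanning path: because the arcs lie in convex position in the given cyclic order, a non-crossing path on the super-vertices must itself be a non-crossing spanning path of the convex ``polygon'' whose vertices are (representatives of) the arcs, and the leaf edges of such a path are forced to be boundary-type connections between cyclically adjacent arcs (this is exactly Observation~\ref{Obs:Basic} applied one level up). Third, I would count: the number of non-crossing spanning paths on $k$ points in convex position that can be realized here, subject to the port constraints and the requirement that consecutive arcs are glued at compatible endpoints, is at most $k$ — intuitively, such a path is determined by the choice of one of its two leaves among the $k$ arcs (the second leaf and the whole path are then essentially forced by the non-crossing condition and the fact that the leaf of the big path must be a leaf of the corresponding arc $P_i$, hence an endpoint of that arc on the boundary).

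The cleanest way to get the bound $k$ is probably an induction on $k$: peeling off a leaf arc $P_i$ (one whose super-vertex is a leaf of the contracted path) reduces to the same problem with $k-1$ arcs, where $P_i$'s boundary neighbor on the appropriate side absorbs the connection; the number of choices for which arc is the ``first'' leaf is at most $k$, but after fixing that leaf the rest is unique, so one does not multiply — one gets $k$ total rather than $k!$ or $2^{k}$. I would set up the induction so that the inductive hypothesis gives ``at most $k-1$ completions once the outermost arc is chosen and its unique gluing performed,'' and then observe the outermost leaf arc can be any one of at most $k$ candidates but each candidate admits a unique way to proceed, which I'd need to check does not overcount — the subtle point being paths counted from ``both ends.''

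The main obstacle I expect is precisely this last bookkeeping: ensuring the count is $k$ and not something like $2k$ or $2^{k-1}$. Naively there are two endpoints per arc and two ends of the big path, so a crude bound gives far more than $k$; squeezing it down to $k$ requires genuinely using the non-crossing constraint to show that, given the cyclic arrangement of the arcs, once you commit to a single leaf (an endpoint of one arc), the non-crossing requirement forces the entire zig-zag order of the remaining arcs and forces at each arc which endpoint is the entry and which is the exit. Handling the degenerate single-vertex paths uniformly (where the two ``ports'' coincide, slightly changing the crossing analysis) is a secondary nuisance I would fold into the same induction rather than treat separately.
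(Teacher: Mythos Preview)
Your proposal contains the right core insight --- that once you fix which endpoint of which $P_i$ is a leaf of $P(v)$, the non-crossing condition forces the entire zig-zag and hence all $k-1$ diagonals uniquely --- but you are overcomplicating the final count. The paper's proof is essentially two lines from exactly this observation: there are at most $2k$ endpoints of the $P_i$'s available to serve as a leaf of $P(v)$, each such choice determines $P(v)$ uniquely, and since every spanning path has exactly two leaves, each valid $P(v)$ is produced twice in this count. Dividing by $2$ gives at most $k$ completions. (The paper also notes that a degenerate $P_i$ cannot contain a leaf of $P(v)$, which only tightens the bound.)

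This dissolves precisely the ``$k$ versus $2k$'' bookkeeping worry you flagged: the factor of $2$ disappears because a path has two ends, not because of any further geometric constraint or inductive peeling. You do not need the contraction to super-vertices, the induction on $k$, or a separate treatment of degenerate arcs --- those would work, but they obscure what is really a one-step double-counting argument. Your own parenthetical remark that ``such a path is determined by the choice of one of its two leaves'' \emph{is} the whole proof; commit to it and stop there rather than retreating to an induction whose invariant you are unsure how to phrase.
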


\begin{proof}
It is easy to see that a degenerate $P_i$ cannot be an endpoint of a path $P(v)$ such that
$\B(v) = P_1 \cup P_2 \cup \ldots \cup P_k$, and that choosing an endpoint of one of the $P_i$'s to be
an endpoint of the path $P(v)$ determines $P(v)$ uniquely (i.e., leaves a single possibility to add
the $k-1$ diagonals), see Figure~\ref{fig:B}. As there are at most $2k$ such endpoints and each path has
two endpoints, at most $k$ different paths can be constructed.
\begin{figure}
\centering
	    \centering
	  			\includegraphics[width=.4\columnwidth, page=1]{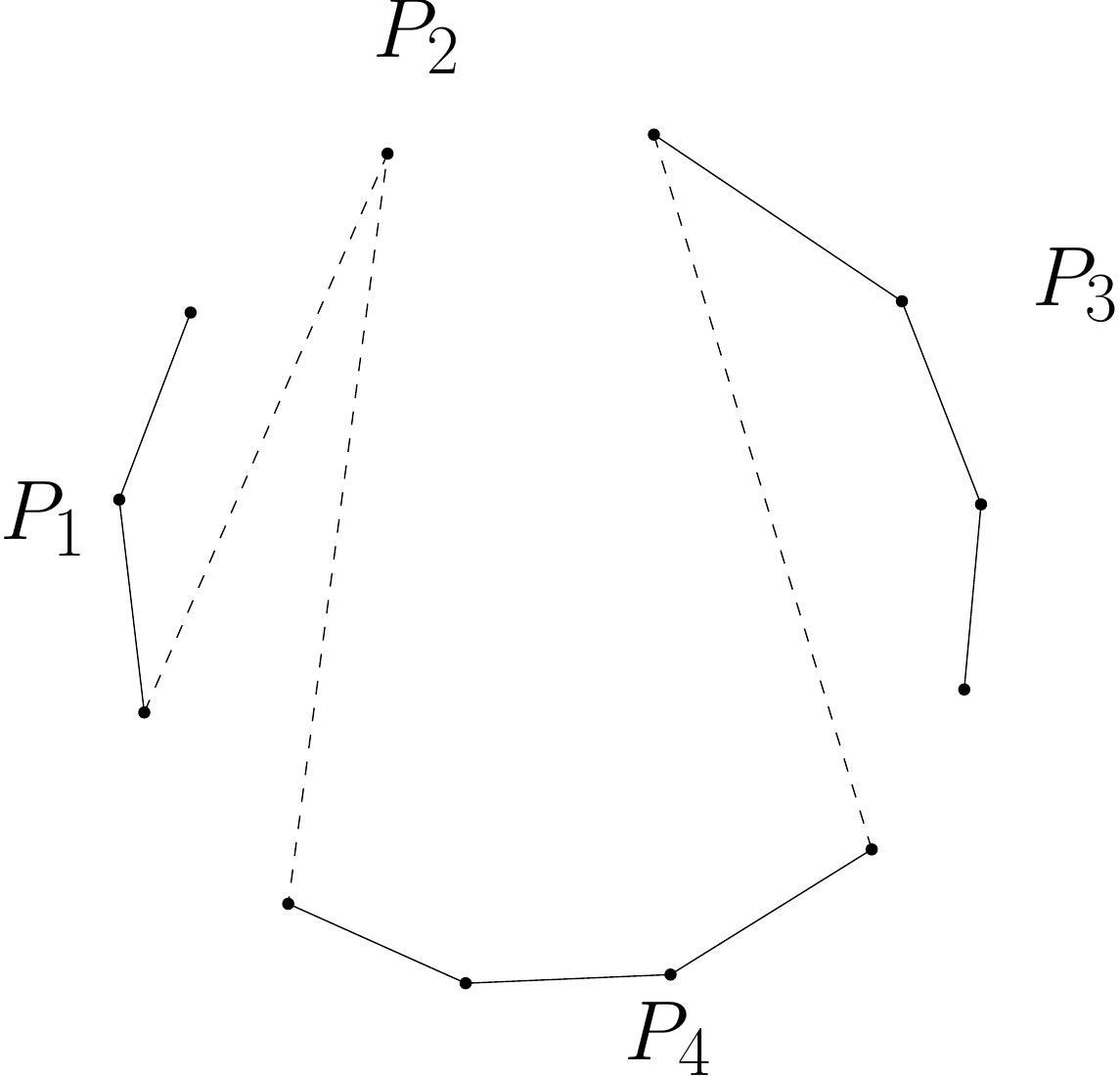}
	 \hspace{0.3cm}
	    \centering
	 				\includegraphics[width=.4\columnwidth, page=2]{chaya2.pdf}
	
 \caption{An illustration for Observation~\ref{Obs:Completion by diagonals}, where $k=4$.
 							There are two possibilities to add $3$ diagonals here (dashed).}
 \label{fig:B}
\end{figure}
\qed
\end{proof}

The determination of the diagonals is performed by induction on $\ell(v)$. The case $\ell(v)=0$ is already done,
since the boundary edges were recovered in Section~\ref{sec:sub:levels}. As the case $\ell(v)=1$ is somewhat
different from the other cases, we present it separately.

Let $v \in V(G(P))$ satisfy $\ell(v)=1$. In such a case, $\B(v)$ consists of two paths $P_1,P_2$.
Clearly, neither of them is degenerate, and at least one of them -- w.l.o.g., $P_1$ -- contains at least two
edges since $n\geq 5$ (see Figure~\ref{fig:C})\footnote{One can check easily that if $n=4$, then our main theorem does not hold, because of the symmetry between pairs of paths in level $1$.}. Denote the endpoints of $P_1$ by $a,c$ and the vertex of $P_1$ adjacent to $a$
by $b$. Furthermore, denote the endpoint of $P_2$ adjacent to $c$ by $d$, and the other endpoint of $P_2$ by $y$.

\begin{figure}[bt]
 \centering
	    \centering
	  			\includegraphics[width=.4\columnwidth, page=1]{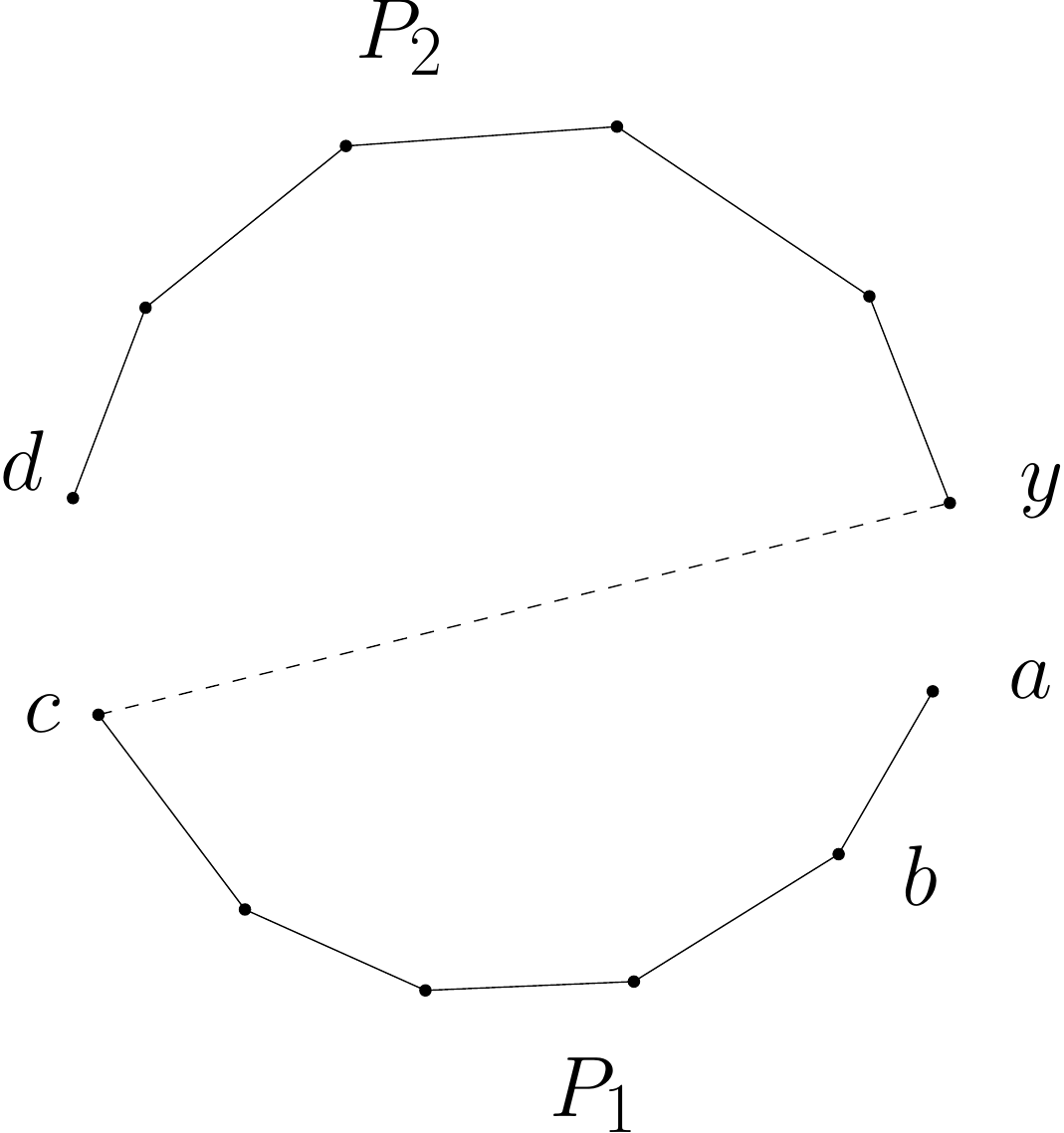}
	 \hspace{0.3cm}
	    \centering
	 				\includegraphics[width=.4\columnwidth, page=2]{chaya3.pdf}
	
 \caption{An illustration for the two cases of Observation~\ref{Obs:Level1}.}
 \label{fig:C}
\end{figure}

So far, we can recover $P_1$ and $P_2$. After they are recovered, by Observation~\ref{Obs:Completion by diagonals},
there are only two possibilities for $P(v)$: adding either $(c,y)$ or $(a,d)$. The following observation allows
distinguishing between these two cases.

\begin{observation}\label{Obs:Level1}
With the above notations, if $P(v)=P_1 \cup P_2 \cup \{(a,d)\}$ then there exists a neighbor $v'$ of $v$ such that
$\ell(v')=2$ and $(a,b) \not \in P(v')$. If $P(v)=P_1 \cup P_2 \cup \{(c,y)\}$ then there is no such neighbor.
\end{observation}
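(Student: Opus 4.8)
The plan is to analyze directly the two candidate completions of $P_1 \cup P_2$ and ask, in each case, which level-$2$ neighbors of $v$ can be obtained by deleting the leaf edge $(a,b)$ of $P_1$. First I would set up the geometry: the endpoints of $P(v)$ in the case $P(v)=P_1 \cup P_2 \cup \{(a,d)\}$ are $c$ and $y$, with $c$ a leaf of $P_1$ and $y$ a leaf of $P_2$; in the case $P(v)=P_1 \cup P_2 \cup \{(c,y)\}$ the endpoints are $a$ and $d$. Recall from Observation~\ref{Obs:Basic}(1) that deleting a non-leaf edge of a path and re-adding an edge to reconnect the two pieces yields up to four completions, and deleting the edge $(a,b)$ — which is \emph{internal} to $P(v)$ in both cases, since $n \geq 5$ forces $P_1$ to have at least two edges so $a$ is not an endpoint of $P(v)$ — splits $P(v)$ into the single vertex $\{a\}$ and a boundary-plus-one-diagonal path on the remaining $n-1$ vertices. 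Wait: deleting $(a,b)$ removes the \emph{boundary} edge $(a,b)$, so the resulting neighbor $v'$ has $\ell(v')$ either $1$ or $2$ depending on whether the re-added edge is the diagonal $(a,?)$ or a boundary edge; since $a$ has exactly one boundary edge available other than $(a,b)$ (namely the boundary edge from $a$ going the other way around $\mathrm{Conv}(P)$, which is the leaf edge of $P_1$ at $a$ — but that is already present, so actually re-adding a boundary edge at $a$ is impossible), every valid re-addition at $a$ that creates a new path must be a diagonal, giving $\ell(v')=2$ and $(a,b)\notin P(v')$. So the question reduces to: in which of the two cases does there exist a simple path obtained by joining $a$ to one of the two current endpoints of $P(v)\setminus\{(a,b)\}$?

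The key step is then a crossing argument. After deleting $(a,b)$, the two pieces are the isolated vertex $a$ and a path $Q$ whose endpoints are: in case $(a,d)$, the vertices $c$ and $y$ (one endpoint of $P_1$ and one endpoint of $P_2$), and in case $(c,y)$, the vertex $d$ together with... hmm, careful — in case $(c,y)$ the path $P(v)=P_1\cup P_2\cup\{(c,y)\}$ has endpoints $a$ and $d$; deleting $(a,b)$ then isolates $a$ and leaves $Q$ with endpoints $b$ and $d$. To get a new simple spanning path we must add either $(a,b)$ (forbidden, original) or an edge from $a$ to the \emph{other} endpoint of $Q$. In case $(c,y)$ the other endpoint is $d$; in case $(a,d)$ there are two endpoints $c,y$ of $Q$ and $a$ was in the interior, so we may add $(a,c)$ or $(a,y)$. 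The heart of the matter is a planarity/convexity check: I would show that in case $P(v)=P_1\cup P_2\cup\{(c,y)\}$, the only available reconnection edge $(a,d)$ either recreates a path isomorphic in edge-set to something already counted or, more to the point, produces a path that is not at level $2$ — in fact adding $(a,d)$ to $Q$ gives exactly $P_1 \cup P_2 \cup \{(a,d)\}$ with $(a,b)$ deleted, wait no. Let me instead frame the real dichotomy the way the authors surely intend: by Observation~\ref{Obs:Completion by diagonals} applied to the boundary paths remaining after removing $(a,b)$ — which are $\{a\}$, the sub-path $b\ldots c$ of $P_1$, and $P_2$ — a degenerate (single-vertex) part cannot be an endpoint, so $a$ must be interior in any valid completion, which immediately shows $\ell(v')=2$ is forced and that the completion is essentially unique given which endpoints of $\{b\ldots c\}$ and $P_2$ are chosen; the count and the crossing conditions then determine in which of the two cases such a $v'$ with $(a,b)\notin P(v')$ exists.

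The main obstacle I anticipate is bookkeeping the crossing conditions cleanly: one has to verify that the diagonal that would reconnect $a$ to an endpoint of $Q$ does not cross any boundary edge of $Q$ (it cannot, boundary edges of a convex polygon are uncrossable) and does not cross the one diagonal present in $Q$, and this is exactly where the two cases diverge — in one case the geometry around the vertex $b$ (neighbor of $a$ on $P_1$) leaves room for a non-crossing diagonal from $a$, in the other it does not, because the single diagonal of $P(v)$ "blocks" $a$ from reaching the far endpoint. I would make this precise by placing the vertices $a,b,c,d,y$ on the convex boundary in cyclic order — noting $a,b,c$ are consecutive-ish as the ends of $P_1$ and $d$ is the neighbor of $c$ continuing into $P_2$ — and then checking directly which of $(a,c),(a,y),(a,d)$ avoids the relevant diagonal. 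I expect the case $P(v)=P_1\cup P_2\cup\{(c,y)\}$ to be the one where no such $v'$ exists because the diagonal $(c,y)$ separates $a$ from $d$, whereas in the case $P(v)=P_1\cup P_2\cup\{(a,d)\}$ the edge $(a,y)$ (or $(a,c)$) is available, giving the desired level-$2$ neighbor missing $(a,b)$. Once this is checked, the statement follows immediately, and this local diagonal-recovery step feeds into the induction on $\ell(v)$ that recovers all diagonals of every path.
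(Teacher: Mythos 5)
Your overall strategy is the paper's: any neighbor $v'$ with $(a,b)\notin P(v')$ must arise by deleting $(a,b)$ and re-adding a single edge, so one analyzes the possible re-additions in each of the two cases, and the decisive fact is that in the case $P(v)=P_1\cup P_2\cup\{(c,y)\}$ the diagonal $(c,y)$ separates $a$ from $d$, so the only candidate re-addition $(a,d)$ crosses it and no such $v'$ exists, whereas in the case $P(v)=P_1\cup P_2\cup\{(a,d)\}$ the diagonal $(a,c)$ gives an explicit witness. That is exactly the published argument, and your identification of $(a,c)$ and of the separating role of $(c,y)$ is correct.

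However, two concrete steps in your write-up are wrong as stated and need repair. First, the status of the edge $(a,b)$ is \emph{not} the same in the two cases: when $P(v)=P_1\cup P_2\cup\{(a,d)\}$ the vertex $a$ is internal (adjacent to both $b$ and $d$), so deleting $(a,b)$ leaves the two pieces $c\cdots b$ and $a\,d\cdots y$ and does \emph{not} isolate $a$; when $P(v)=P_1\cup P_2\cup\{(c,y)\}$ the vertex $a$ is a leaf, and only then does deleting $(a,b)$ isolate it. You assert both "internal in both cases" and "isolates $a$" at various points, which are mutually inconsistent and each false in one case; the case split matters because it changes which endpoints are available for reconnection. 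Second, in the case $P(v)=P_1\cup P_2\cup\{(a,d)\}$ you offer $(a,y)$ as an available reconnection edge. It is not: after deleting $(a,b)$, the vertices $a$ and $y$ are the two endpoints of the \emph{same} component $a\,d\cdots y$, so adding $(a,y)$ (which is moreover a boundary edge of $\mathrm{Conv}(P)$) creates a cycle rather than a spanning path. The only valid new re-addition in that case is $(a,c)$ (the other cross-component candidates $(c,y)$ and $(b,y)$ both cross the diagonal $(a,d)$). With these two corrections the argument closes and coincides with the paper's proof.
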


\begin{proof}
If $P(v)=P_1 \cup P_2 \cup \{(a,d)\}$, then $v'$ defined by $P(v')=P_1 \cup P_2 \cup \{(a,d)\} \cup \{(a,c)\} \setminus \{(a,b)\}$
is the desired neighbor. If $P(v)=P_1 \cup P_2 \cup \{(c,y)\}$ and a neighbor $v'$ is constructed by removing the edge $(a,b)$ (see Figure~\ref{fig:C}),the added edge must be $(a,d)$ (it must emanate from $a$ as otherwise $a$ is isolated, and the other endpoint must be $d$ as all other
vertices are already of degree $2$), and this is impossible since $(a,d)$ crosses $(c,y)$.
\qed
\end{proof}

Observation~\ref{Obs:Level1} implies that if $\ell(v)=1$ then all edges of $P(v)$ can be recovered. Assume now that
$\ell(v) := k-1 \geq 2$ and that for any $v$ with $\ell(v) \leq k-2$ we can recover all edges of $P(v)$. We show that
all edges of $P(v)$ can be recovered.

The boundary edges of $P(v)$ can be divided into $k$ (possibly degenerate) paths $P_1, P_2, \ldots, P_k$ that can be
recovered by the technique of Section~\ref{sec:sub:levels}. Once they are recovered, by Observation~\ref{Obs:Completion by diagonals},
in order to fully recover $P(v)$, it is sufficient to determine which of the endpoints of the $P_i$'s is a leaf
of $P(v)$. Note that as mentioned above, a degenerate $P_i$ cannot be a leaf of $P(v)$, and that there are at least
two non-degenerate $P_i$'s, as any spanning path has at least two boundary edges, and they lie in different $P_i$'s
unless the path is a boundary path. The leaves of $P(v)$ can be determined using the
following observation.

\begin{observation}\label{Obs:LevelBeyond1}
Let $P_i$ be non-degenerate. Denote the endpoints of $P_i$ by $a,c$, denote the endpoint of
$P_{i+1}$ adjacent to $a$ by $b$ 
(see Figure~\ref{fig:D}(a)). Then $a$ is
a leaf of $P(v)$ if and only if there exists a neighbor $v'$ of $v$ such that $\ell(v')=k-2$, $(a,b) \in P(v')$, and
$c$ is a leaf of $P(v')$.
\end{observation}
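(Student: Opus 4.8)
The plan is to prove Observation~\ref{Obs:LevelBeyond1} by constructing the required neighbor $v'$ explicitly in the forward direction and ruling out the wrong construction in the backward direction, mirroring the argument already used for the level-$1$ case in Observation~\ref{Obs:Level1}. First I would set up the geometry carefully: since $P_i$ and $P_{i+1}$ are consecutive boundary paths in the cyclic order, the edge $(a,b)$ is exactly the boundary edge of $\mathrm{Conv}(P)$ that is missing between them, so $a$ and $b$ are neighboring vertices of the boundary that belong to different $P_j$'s. The key structural fact, coming from Observation~\ref{Obs:Completion by diagonals}, is that once we know $\B(v)=P_1\cup\cdots\cup P_k$ and we know which endpoint of some $P_j$ is a leaf of $P(v)$, the whole path $P(v)$ is determined. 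So the two candidates for $P(v)$ are ``$a$ is a leaf'' versus ``$a$ is not a leaf,'' and we must tell them apart using only abstract-graph information at level $k-2$, which by induction is fully reconstructible.

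For the forward direction, assume $a$ is a leaf of $P(v)$. Then removing the boundary leaf edge of $P(v)$ incident to $a$ (which is the edge of $P_i$ from $a$ towards $c$) leaves $\{a\}$ isolated and a spanning path on $P\setminus\{a\}$; reconnecting $a$ to $b$ via the boundary edge $(a,b)$ produces a simple spanning path $v'$ — simple because $(a,b)$ is a boundary edge and cannot cross anything — with $\ell(v')=\ell(v)-1=k-2$, with $(a,b)\in P(v')$, and in which the new leaf is $c$ (the other endpoint of $P_i$), exactly as required. I should double-check that $(a,b)\in P(v')$ really forces $\ell(v')=k-2$ and not $k$: replacing a boundary edge of $P(v)$ by another boundary edge changes the number of diagonals by $0$ or decreases it, and here the two boundary paths $P_i,P_{i+1}$ get merged into one, so the count of boundary-path components drops by one, i.e. $\ell$ drops by one. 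For the backward direction, suppose such a $v'$ exists: $\ell(v')=k-2$, $(a,b)\in P(v')$, $c$ a leaf of $P(v')$. Since $\ell$ dropped, $v'$ is obtained from $v$ by deleting one edge of $P(v)$ and adding one edge, and the only way to drop a level while keeping all the other $P_j$'s intact and producing $(a,b)$ is to have deleted a leaf edge of $P(v)$ incident to $a$ and to have added $(a,b)$; combined with $c$ being a leaf of $P(v')$, this pins down that the deleted edge was the $P_i$-edge at $a$, which is only possible if $a$ was a leaf of $P(v)$ to begin with. The alternative case $P(v)=$``$a$ not a leaf'' is excluded because then the edge we would need to delete at $a$ is an internal edge of $P(v)$, and reconnecting to force both $(a,b)\in P(v')$ and $c$ a leaf runs into a crossing obstruction exactly as in Observation~\ref{Obs:Level1}.

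The main obstacle I anticipate is the backward direction bookkeeping: I need to argue that among all neighbors $v'$ of $v$ at level $k-2$, the single combinatorial signature ``contains $(a,b)$ and has $c$ as a leaf'' is attained only in the good case, and this requires ruling out that a different edge deletion/insertion accidentally produces the same signature. Here I would lean on the fact that a level drop forces the inserted edge to be a boundary edge merging two consecutive components (by Observation~\ref{Obs:Basic} and the classification of union/intersection max-cliques), so the inserted boundary edge is forced to be $(a,b)$, and then the deleted edge must be incident to $a$ (else $a$ has degree $2$ in $v'$ but was already saturated, or $(a,b)$ would create a vertex of degree $3$); requiring additionally that $c$ — the far endpoint of $P_i$ — becomes a leaf forces the deleted edge to lie along $P_i$, and since $P_i$ is a boundary path the only such edge whose deletion yields a spanning path with $c$ as a leaf is the one incident to $a$. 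That deletion is legal precisely when $a$ was a leaf of $P(v)$, closing the loop. A final remark: the roles of $P_i$ and $P_{i+1}$ (equivalently, of $a$ and of the other endpoint $c$, and of the two cyclic neighbors of $P_i$) are symmetric, so applying the observation to each non-degenerate $P_i$ and each of its two endpoints determines exactly which endpoint is a leaf of $P(v)$, hence by Observation~\ref{Obs:Completion by diagonals} recovers all diagonals of $P(v)$, completing the induction.
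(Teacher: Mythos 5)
Your forward-direction construction is not the right edge swap, and it breaks down. When $a$ is a leaf of $P(v)$, the unique completion of $P_1\cup\cdots\cup P_k$ guaranteed by Observation~\ref{Obs:Completion by diagonals} forces the \emph{diagonal} $(b,c)$ to lie in $P(v)$: the path runs from $a$ along $P_i$ to $c$ and then jumps to $b$. Consequently $b$ already has degree $2$ in $P(v)$, so your proposed swap --- delete the boundary leaf edge of $P_i$ at $a$ and add $(a,b)$ --- gives $b$ degree $3$ and does not produce a vertex of $G(P)$ at all. Even setting that aside, exchanging one boundary edge for another cannot change $|\D(v')|$, so your $v'$ would sit at level $k-1$, not $k-2$; your claim that ``the count of boundary-path components drops by one'' is mistaken, since the deletion splits off the degenerate component $\{a\}$ which the insertion of $(a,b)$ merely re-merges, a net change of zero. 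Also the new leaf after deleting the $P_i$-edge at $a$ would be the neighbour of $a$ on $P_i$, not $c$. The correct neighbour is $P(v')=(P(v)\setminus\{(b,c)\})\cup\{(a,b)\}$: deleting the diagonal $(b,c)$ drops the level to $k-2$, keeps $b$ at degree $2$, and turns $c$ into a leaf.

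The backward direction inherits this problem and, more seriously, does not engage with the case that the clause ``$c$ is a leaf of $P(v')$'' exists to exclude. You correctly observe that if both $a$ and $b$ are internal in $P(v)$ then $P(v)\cup\{(a,b)\}$ has two vertices of degree $3$ and no single deletion can repair it. But if $b$ is a leaf of $P(v)$ while $a$ is internal, a neighbour $v'$ with $(a,b)\in P(v')$ and $\ell(v')=k-2$ \emph{does} exist, namely $(P(v)\setminus\{(a,d)\})\cup\{(a,b)\}$, where $d$ is the other endpoint of $P_{i+1}$ and the diagonal $(a,d)$ lies in $P(v)$ by the same spiral structure; to finish one must argue that in this $v'$ the new leaf is $d$ and that $c$ cannot also be a leaf (otherwise $v'$ would be a boundary path, impossible since $k\ge 3$, cf.\ Observation~\ref{Obs:Basic}). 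Your argument instead tries to force the deleted edge to be ``the $P_i$-edge at $a$,'' a deletion that is neither necessary for the result to contain $(a,b)$ nor level-decreasing, so the uniqueness claim that closes your equivalence does not hold as written.
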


\begin{figure}  [tb]
 \centering
	    \centering
	  			\includegraphics[width=.4\columnwidth]{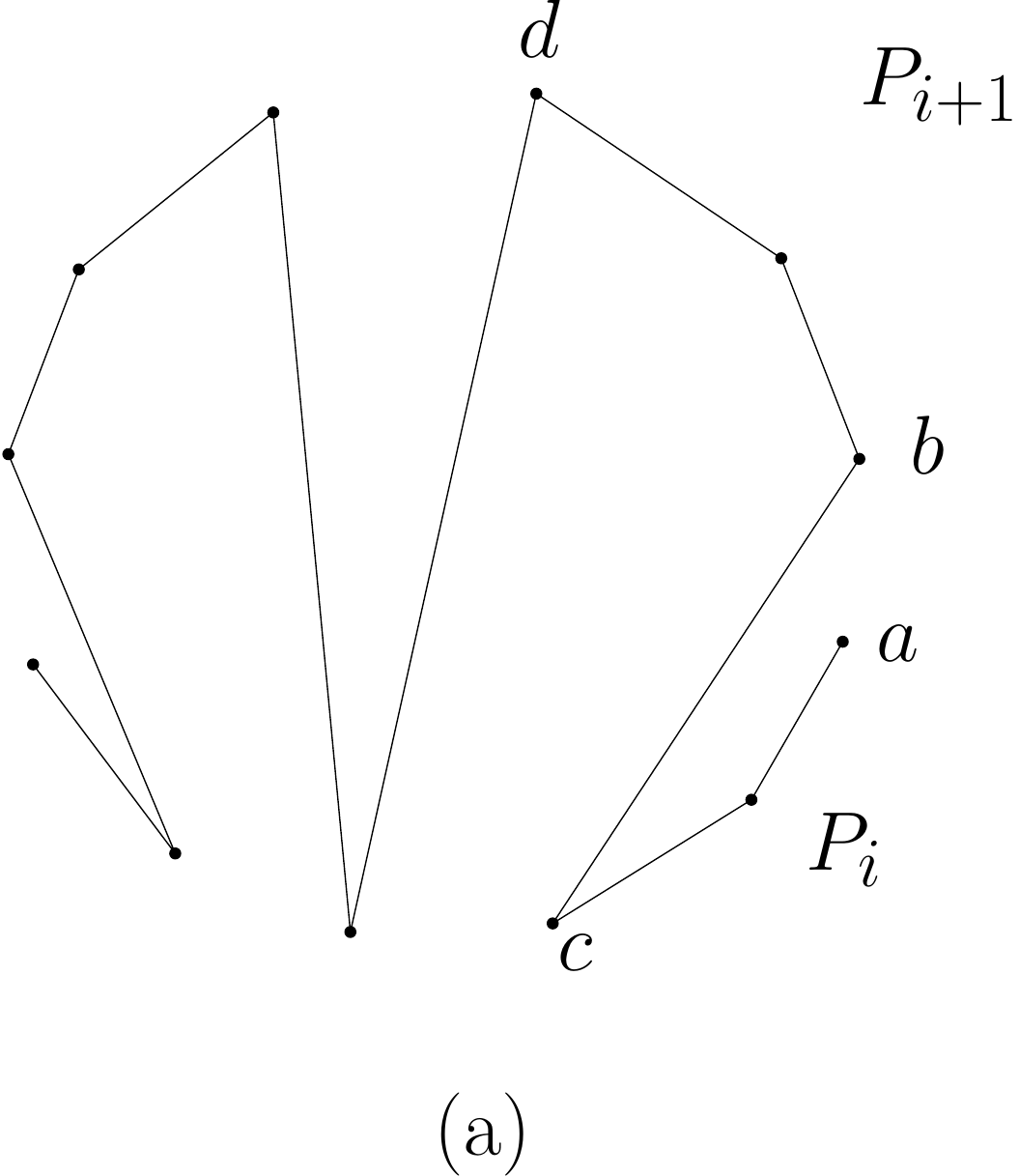}
	 \hspace{0.5cm}
	    \centering
	 				\includegraphics[width=.4\columnwidth]{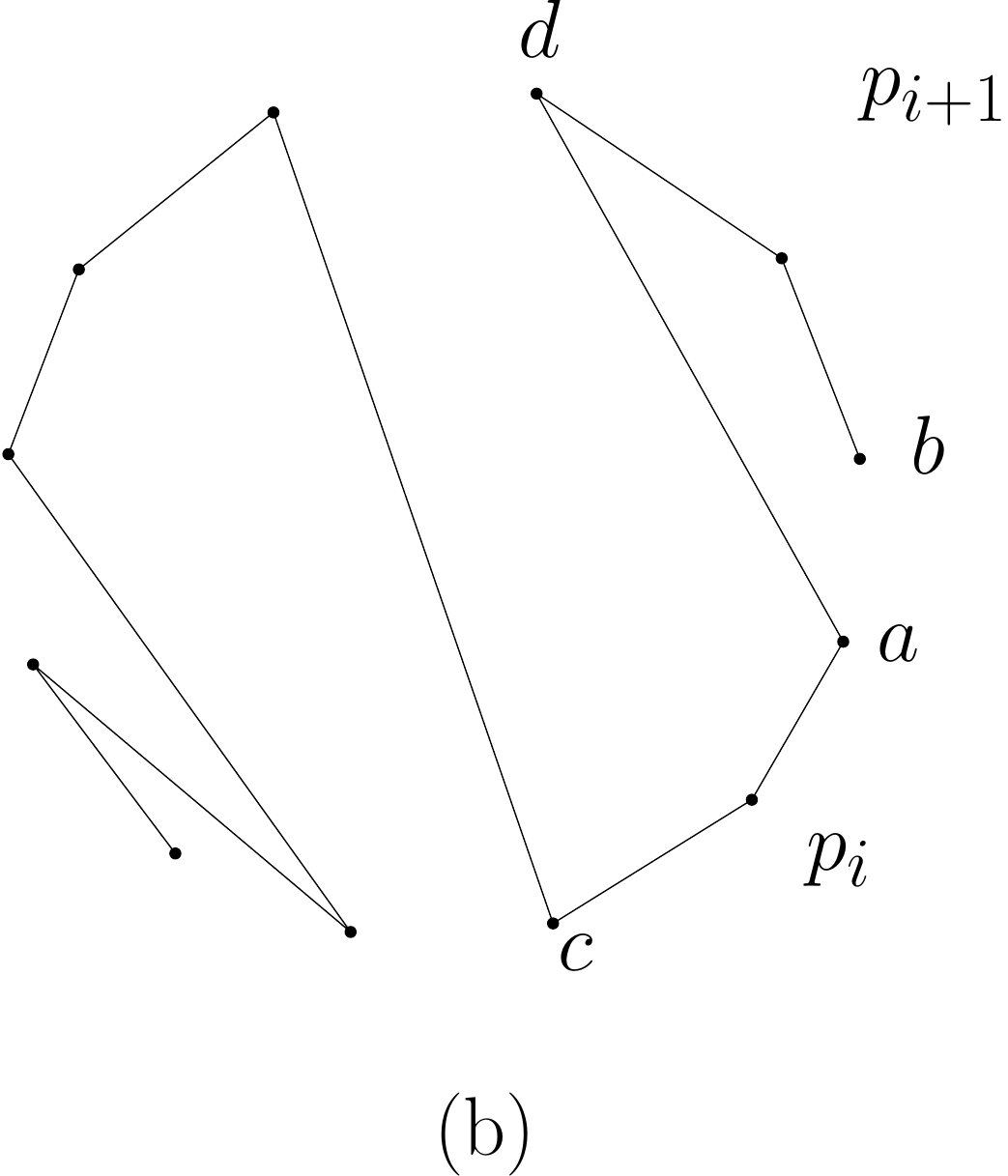}
	\caption{Illustrations for Observation~\ref{Obs:LevelBeyond1}.
							In (a), $a$ is a leaf of $P(v)$,
								while in (b) $a$ is not a leaf, but $b$ is a leaf of $P(v)$.}
	\label{fig:D}
	
\end{figure}

\begin{proof}
If $a$ is a leaf of $P(v)$, as depicted in Figure~\ref{fig:D}(a), then $(b,c) \in P(v)$. Hence, $v'$ defined by $P(v')=(P(v) \setminus \{(b,c)\}) \cup
\{(a,b)\}$ is the desired neighbor.

If both $a$ and $b$ are internal vertices of $P(v)$ then there does not exist a neighbor $v'$ with $(a,b) \in P(v')$,
since $P(v) \cup (a,b)$ contains two vertices of degree~3.

Finally, if $a$ is an internal vertex of $P(v)$ and $b$ is a leaf of $P(v)$, as depicted in Figure~\ref{fig:D}(b),
then $P_{i+1}$ is not degenerate. Denote its other endpoint by $d$.
Then the only neighbor $v'$ of $v$ such that $\ell(v')=k-2$ and $(a,b) \in P(v')$ satisfies $P(v')=(P(v) \setminus \{(a,d)\}) \cup
\{(a,b)\}$. In $P(v')$, $d$ is a leaf, and hence, $c$ cannot be a leaf of $P(v')$ as by Observation~\ref{Obs:Basic} this would
imply that $v'$ is a boundary path, contrary to the assumption $k \geq 3$. This completes the proof.
\qed
\end{proof}

Combining observations~\ref{Obs:Level1} and~\ref{Obs:LevelBeyond1}, we can recover all edges of any $v \in V(G(P))$,
by induction on $\ell(v)$.

\subsection{The automorphism group of $G(P)$ is $D_{n}$}
\label{sec:sub:automorphism}

As mentioned above, it is clear that any automorphism of $K(P)$ as a geometric graph induces an automorphism of $G(P)$.
It is well-known that $\mathrm{Aut}(K(P))=D_{n}$,
and thus, $D_{n} \hookrightarrow \mathrm{Aut}(G(P))$ (i.e., $D_{n}$ is isomorphic to a subgroup of $\mathrm{Aut}(G(P))$).

On the other hand, any automorphism of $G(P)$ must preserve the sizes of the max-cliques, and in particular, preserve
the set $\B$. Moreover, it must preserve the information whether for $v,w \in \B$, the edges $e_v,e_w$ share a vertex
(see Observation~\ref{Obs:Sharing a vertex}). Hence, it must preserve the identification of a ``copy'' of the boundary
of $\mathrm{Conv}(P)$ in $G(P)$
presented in Section~\ref{sec:sub:boundary} (which is defined up to an automorphism of $K(P)$). Finally, it follows
from the recovery process presented in Sections~\ref{sec:sub:levels} and~\ref{sec:sub:diagonals} that an automorphism
of $G(P)$ is completely determined by its action on the copy of the boundary of $\mathrm{Conv}(P)$ in $G(P)$. Therefore, $\mathrm{Aut}(G(P)) \cong D_{n}$.

\section{Complexity Analysis}
\label{sec:analysis}

The algorithmic approach presented in the previous sections allows us not only to show that
$\mathrm{Aut}(G(P))\cong D_{n}$, but also to recover the edges of all paths represented by
vertices of $G(P)$ efficiently. The following theorem calculates the complexity of our
algorithm.

\begin{theorem} \label{thm:complexity}
Let $G(P)$ be the path graph of a set $P$ of $n \geq 5$ points in convex position in the plane, and
denote $N:= |V(G(P))| = n2^{n-3}$ (see~\cite{AIM07}). The edges of all paths represented by
vertices of $G(P)$ can be recovered in time $O(N \log N)$.
\end{theorem}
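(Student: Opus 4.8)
The plan is to account for the running time of the three-step reconstruction algorithm described in Section~\ref{sec:main}, showing that each step costs $O(N \log N)$. First I would record the relevant size parameters: the input $G(P)$ has $N = n 2^{n-3}$ vertices, each vertex has degree at most $3n-7 = O(\log N)$, so $|E(G(P))| = O(N \log N)$, and the bit-complexity of storing one path is $\Theta(n) = \Theta(\log N)$. Thus even reading the whole graph and writing the whole answer costs $\Theta(N \log N)$, which is why this bound is close to optimal; the goal is to show we do not exceed it.

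For Step 1 (identifying $\B$), I would note that by Claim~\ref{claim:deg} the set $\B$ is exactly the set of vertices of degree $3n-7$, which can be found in a single pass over the adjacency lists in time $O(|E(G(P))|) = O(N \log N)$. To order $\B$ into the ``copy'' of the boundary cycle via Observation~\ref{Obs:Sharing a vertex}, I would observe that for $u, v \in \B$ we must decide whether the edge $(u,v)$ lies in a $4$-clique; since $|\B| = n$ and each $v \in \B$ has $O(n)$ neighbors, we can restrict attention to pairs $u,v \in \B$ that are adjacent in $G(P)$, and for each such pair test $4$-clique membership by looking for a common neighbor of the appropriate combinatorial type, all in $O(n^2) = O(\log^2 N) = O(N)$ total time. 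Building the resulting cycle on $\B$ and fixing one of its $2n$ embeddings is then $O(n)$.

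For Step 2 (recovering $\B(v)$ for all $v$), the key point is Observation~\ref{Obs:Level-recovery}, $\ell(v) = \mathrm{dist}(v, \B)$, together with Observation~\ref{Obs:Boundary-recovery}, which says $\B(P) \setminus \B(v)$ is exactly the set of boundary edges corresponding to the $\ell(v)+1$ vertices of $\B$ at distance $\ell(v)$ from $v$. I would run a single multi-source breadth-first search from all of $\B$ simultaneously; this assigns $\ell(v)$ to every vertex and costs $O(|V| + |E|) = O(N \log N)$. To extract, for each $v$, which boundary vertices of $\B$ realize the minimum distance, I would instead run $n$ separate BFS traversals, one from each vertex of $\B$; each BFS costs $O(N \log N)$ and there are $n = O(\log N)$ of them, giving $O(N \log^2 N)$ — so to stay within the claimed bound I would argue more carefully: during the single multi-source BFS, when a vertex $v$ is first reached we can also record the set of sources achieving that distance (each vertex stores a short list, but this list has size up to $d+1 = O(\log N)$, again risking a $\log$ factor). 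The cleanest accounting keeps a pointer from each $v$ to its BFS parent and, since the $\ell(v)+1$ missing boundary edges are needed anyway as part of the $\Theta(\log N)$-bit output for $v$, charging their computation to the output size keeps the per-vertex cost at $O(\log N)$ amortized, hence $O(N \log N)$ overall; I would spell out this charging argument as the delicate point of the section.

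For Step 3 (recovering the diagonals), the induction of Section~\ref{sec:sub:diagonals} processes vertices in increasing order of level, which we already have from Step 2; a bucket sort by level costs $O(N)$. For a vertex $v$ with $\ell(v) = k-1$, recovering the diagonals amounts, by Observation~\ref{Obs:Completion by diagonals}, to choosing which endpoint of the $P_i$'s is a leaf of $P(v)$, and by Observations~\ref{Obs:Level1} and~\ref{Obs:LevelBeyond1} this is decided by inspecting the $O(\log N)$ neighbors of $v$ of level $k-2$ and, for each, checking two already-computed facts (membership of a specific edge $(a,b)$ and whether a specific vertex $c$ is a leaf) — an $O(\log N)$ computation per neighbor-pair comparison, but since the number of candidate endpoints is $O(k) = O(\log N)$ and each test is $O(1)$ given the stored paths, the per-vertex cost is $O(\log N)$, and summing over all $N$ vertices gives $O(N \log N)$. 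Combining the three steps yields the claimed $O(N \log N)$ total running time. The main obstacle I anticipate is the bookkeeping in Steps 2 and 3: naively each step looks like it wants an extra $\log N$ factor (from running $n$ separate searches, or from per-vertex lists of size $\Theta(\log N)$), and the argument has to be organized so that these costs are charged against the $\Theta(\log N)$-size output per vertex rather than incurred on top of it.
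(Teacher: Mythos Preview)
Your three-step outline mirrors the paper's algorithm, but you are missing the one nontrivial ingredient that makes the $O(N\log N)$ bound go through: Lemma~\ref{Lem:linear_edges}, which shows that $|E(G(P))|=O(N)$, i.e.\ the \emph{average} degree in $G(P)$ is bounded by a constant even though the maximum degree is $\Theta(n)=\Theta(\log N)$. The paper then bounds the whole level-by-level recovery by ``$O(n)$ work per edge'' $\times$ ``$O(N)$ edges'' $=O(N\log N)$.

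You only invoke the trivial bound $|E(G(P))|=O(N\log N)$ from the maximum degree, and this is precisely where your Step~2 breaks down. Computing, for every $v$, the set of boundary vertices at distance $\ell(v)$ is not covered by your ``charging to the output'' sketch: the output size for $v$ is $\Theta(\log N)$, but the natural ways to compute that set --- running $n$ single-source BFS traversals, or propagating $S(v)=\bigcup_{u\sim v,\ \ell(u)=\ell(v)-1} S(u)$ as $n$-bit vectors --- cost $O(n\cdot|E(G(P))|)$, which with your edge bound is $O(N\log^2 N)$, not $O(N\log N)$. The same extra $\log N$ reappears in your Step~3 accounting once you unfold ``each test is $O(1)$'': applying Observation~\ref{Obs:LevelBeyond1} to a candidate endpoint $a$ requires scanning the lower-level neighbors of $v$, so the honest per-vertex cost is $O(k\cdot\deg(v))$ rather than $O(k)$.

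In short, the plan is right but the analysis needs the constant-average-degree lemma; without it the argument as written only yields $O(N\log^2 N)$.
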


\noindent We note that this complexity is not far from optimal, since the graph $G(P)$ contains $N$ vertices, and its recovery requires identifying
the path of size $n-1 \approx \log N$ that each vertex represents.
In the proof of the theorem we will use an auxiliary lemma. Recall that by Claim~\ref{claim:deg}, the degree of each vertex $v$ in $G(P)$ is at most $O(n)=O(\log N)$. The lemma asserts that the average degree is much smaller - namely, bounded by a constant.

\begin{lem} \label{Lem:linear_edges}
$|E(G(P))|=O(N).$
\end{lem}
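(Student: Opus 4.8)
The plan is to bound the number of edges of $G(P)$ by counting, for each vertex $v$, the edges of $G(P)$ incident to $v$, and showing that this sum is $O(N)$. Since $N = n 2^{n-3}$, it suffices to show that the \emph{total} degree $\sum_{v} \deg_{G(P)}(v) = O(n 2^{n-3})$; equivalently, that the average degree is $O(1)$. The naive bound from Claim~\ref{claim:deg} only gives $\deg_{G(P)}(v) \le 3n-7$, yielding $|E(G(P))| = O(Nn) = O(N \log N)$, which is too weak. The key observation is that a vertex $v$ has large degree only if $P(v)$ has many internal boundary edges, and such paths are rare.

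More precisely, I would first recall from the proof of Claim~\ref{claim:deg} that for any $v$, the number of neighbors of $v$ obtained by deleting a leaf edge is at most $2$, and each internal edge of $P(v)$ contributes at most $3$ neighbors; hence $\deg_{G(P)}(v) \le 3\,b(v) + 2$, where $b(v)$ denotes the number of \emph{internal} boundary edges of $P(v)$ (an internal edge that happens to be a boundary edge). Actually, even more simply, $\deg_{G(P)}(v) \le 3 |\B(v)| + 2 \le 3|\B(v)| + 2$. So it suffices to show $\sum_{v \in V(G(P))} |\B(v)| = O(N)$, i.e., that the average number of boundary edges over all simple spanning paths is $O(1)$. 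Equivalently, switching the order of summation, $\sum_{v}|\B(v)| = \sum_{e \in \B(P)} \#\{v : e \in P(v)\}$, so I need to bound, for each fixed boundary edge $e$, the number of simple spanning paths of $P$ that use $e$.

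The heart of the argument is therefore: \emph{for a fixed boundary edge $e$ of $\mathrm{Conv}(P)$, the number of simple spanning paths of $K(P)$ containing $e$ is $O(2^{n-3})$} — indeed I expect it to be at most a constant fraction (roughly $1/n$) of all $n2^{n-3}$ paths, or at any rate $O(2^{n})$ with a constant independent of $n$. This should follow from a direct combinatorial description of simple spanning paths of points in convex position: contracting the boundary edge $e = (x,y)$ identifies $x$ and $y$ and reduces to counting simple spanning paths (or path-like structures) on $n-1$ convex points with a marked vertex, whose count is $O(2^{n-3})$ by the formula $|V(G(P'))| = (n-1)2^{n-4}$ for the smaller point set, up to lower-order corrections for the endpoint bookkeeping. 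Summing over the $n$ boundary edges gives $\sum_v |\B(v)| = O(n \cdot 2^{n-3}) = O(N)$, and hence $|E(G(P))| \le \tfrac12(3\sum_v|\B(v)| + 2N) = O(N)$.

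The main obstacle is making the count of simple spanning paths through a fixed boundary edge clean and rigorous. One must be careful that contracting $e$ does not quite produce an arbitrary spanning path of the smaller configuration (the contracted vertex has a constrained local structure, and degenerate cases where $e$ is a leaf edge versus an internal edge must be separated). I would handle leaf-edge occurrences and internal occurrences of $e$ in $P(v)$ separately: if $e$ is a leaf edge of $P(v)$, removing it gives a simple spanning path of $P \setminus \{z\}$ for an endpoint $z$ of $e$, of which there are $O(2^{n-3})$ by the Akl et al. formula; if $e$ is internal, its removal splits $P(v)$ into two boundary sub-paths meeting $e$, and a short case analysis bounds the number of such $v$. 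Assembling these bounds and summing over $e \in \B(P)$ completes the proof.
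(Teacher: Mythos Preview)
Your approach has a fatal gap at the central counting step. The claim that a fixed boundary edge $e$ lies in only $O(2^{n-3})$ (or even $O(2^n)$) simple spanning paths is false. Using the standard encoding of such paths (fix $x_0$; at each step move to one of the two ends of the remaining arc), one checks that for $2\le i\le n-2$ the $i$-th edge is a boundary edge for exactly half of the sequences, while the first and last edges are always boundary. Hence the average of $|\B(v)|$ over all $N$ paths is $(n+1)/2$, so $\sum_v |\B(v)| = \tfrac{n+1}{2}\,N = \Theta(nN)$, and by symmetry each boundary edge lies in exactly $(n{+}1)\,2^{n-4}=\Theta(N)$ paths. Your contraction heuristic in fact confirms this rather than your hoped-for bound: contracting $e$ yields a path on $n-1$ convex points (of which there are $(n-1)2^{n-4}$), and each un-contracts in at most two ways, again giving $\Theta(N)$. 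Thus, even granting the degree bound $\deg(v)\le 3|\B(v)|+O(1)$, summing it only recovers $|E(G(P))|=O(nN)$, no better than the trivial bound from Claim~\ref{claim:deg}.

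The inequality $\deg(v)\le 3\,b(v)+2$ is also not a consequence of ``each internal edge contributes at most $3$ neighbors''; that premise only gives $3(n-3)+2$. What is actually needed --- and what the paper uses --- is sharper and qualitatively different: if the first $k-1$ and last $\ell-1$ edges of $P(v)$ are boundary and the $k$-th and $(n-\ell)$-th are diagonals, then deleting any edge other than the first $k$ or last $\ell$ leaves the two leaves separated from each other by both extreme diagonals, so no reconnecting edge exists. This yields $\deg(v)\le 3(k+\ell)$, and since there are at most $O(n\,t\,2^{n-1-t})$ paths with $k+\ell=t$, the sum $\sum_t t^2 2^{-t}$ converges and $|E(G(P))|=O(N)$. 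The essential point your plan misses is that only the boundary edges in the two \emph{extreme} runs control the degree; boundary edges sandwiched between diagonals contribute nothing, so bounding $\sum_v |\B(v)|$ is the wrong target.
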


The proof of the lemma will be presented at the end of this section, and meanwhile we present the proof of the theorem.

\begin{proof}[of Theorem~\ref{thm:complexity}]
We go over the steps of the algorithm that recovers the edges of all paths and calculate the complexity
of each step.

\medskip \noindent \textbf{Recovery of the boundary paths.}
As mentioned in Section~\ref{sec:sub:boundary}, identifying the set $\B$ of all boundary paths as a set, can be done by finding the vertices of degree $3n-7$ in $G(P)$.
The complexity of this step is $$\sum_{v \in V(G(P))}\mathrm{deg}(v) = 2|E(G(P))|=O(N),$$ using Lemma~\ref{Lem:linear_edges}.

\medskip \noindent \textbf{Detecting a ``copy'' of the boundary of $\mathrm{Conv}(P)$ in $G(P)$.} As mentioned in Section~\ref{sec:sub:boundary}, once the
set $\B$ of vertices that represent the $n$  boundary paths is found,
 this step can be performed easily by going over all edges spanned by pairs of vertices in
 $\B$
and checking whether each such edge is contained in a max-clique of size 4 or not.
By Corollaries~\ref{cor:max} and~\ref{cor:clique size}, for each such pair ${u,v}$, it is sufficient to check whether there exists $w \in V(G(P)) \setminus \B$ which is a common neighbor of $u$ and $v$. Since the number of neighbors of any vertex in $G(P)$ is bounded by $O(\log N)$,
the complexity of this step is less than $O(\log^4 N)$ operations.

\medskip \noindent \textbf{Recovering all edges of each path.}

We prove that
this third step can be performed in $O(N \log N)$
operations, using the following strategy. For each $v \in V(G(P))$, we store three types of information:
\begin{enumerate}
\item $\mathcal{B}(v)$ (i.e., the set of boundary edges of $P(v)$),

\item $\ell(v)$ (i.e., the level of $v$),

\item The endpoints of $P(v)$.
\end{enumerate}
Note that by the proof of Observation~\ref{Obs:Completion by diagonals}, items~(1)--(3) yield full
recovery of the edges of $\mathcal{P}(v)$.

We go over the vertices of $G(P)$ by levels, starting with level~0, then level~1 (i.e., the neighbors
of the vertices in level~0 that were not dealt with yet), then level~2, etc.

For each $v \in V(G(P))$ with $\ell(v)=i \geq 2$, items~(1)--(3) for $v$ can be computed instantly given
items~(1)--(3) for all neighbors of $v$ at level~$(i-1)$ (as described in Observation~\ref{Obs:LevelBeyond1} and in the proof of Observation~\ref{Obs:Level-recovery}).

For vertices with $\ell(v)=1$, recovery of item~(3) requires the knowledge of items~(1)--(2) for
their neighbors at levels~0,2 (as described in Observation~\ref{Obs:Level1}). Hence, after computing
items~(1)--(3) for all vertices at level~0, we compute items~(1)--(2) for the vertices at level~1, then
items~(1)--(2) for vertices at level~2, then item~(3) for vertices of level~1, then item~(3) for vertices
at level~2, and then all items in increasing order of levels.


The treatment of each vertex $v$ requires going over each neighbor $u$ of $v$, and (in the worst case) reading the information-type $\B(u)$ whose size is at most $n-1$. Eventually, each edge of $G(P)$ is considered twice, where each treatment requires $O(n)$ operations, and thus, by Lemma~\ref{Lem:linear_edges}, the total number of operations is bounded by $O(nN)=O(N \log N)$.
Therefore, the total time complexity of our algorithm is $O(N \log N)$, as asserted.
\qed
\end{proof}

Now, it only remains to prove Lemma~\ref{Lem:linear_edges}.

\begin{proof}[of Lemma~\ref{Lem:linear_edges}]
By symmetry, we may consider the vertices $v$ of $G(P)$ that correspond to paths $P(v)$ in which one leaf $x_0 \in P$ is fixed, and then multiply the result by $n$.
We represent any such path
$P(v)=\langle x_0, x_1, \ldots x_{n-1} \rangle$ by a binary vector $\langle \alpha_0, \alpha_1, \ldots \alpha_{n-1} \rangle$ where $\alpha_0=0$, and
$\alpha_i=0$ if and only if the edge $(x_{i-1},x_i)$ in $P(v)$ is a boundary edge. Note that $\alpha_{n-1}=0$. Assume that $P(v)$ contains at least two diagonals. We call $P(v)$ \emph{a path of type $t$} if $$t=\min_i{\{\alpha_i \neq 0 \}} + \min_j{\{\alpha_{n-j} \neq 0 \}},$$ namely, if $P(v)$ starts with $k-1$ boundary edges and ends with $l-1$ boundary edges, for some $k,l \geq 2$ such that $k+l=t$.

We observe that a neighbor of $v$ in $P(v)$ can be obtained only by deleting one out of the first $k$ edges or the last $l$ edges of $P(v)$, and adding another edge instead. Indeed,  deletion of any other edge of $P(v)$ decomposes $P(v)$ into two paths, where a leaf of the first one is $x_0$, a leaf of the second one is $x_{n-1}$, the diagonal $(x_{k-1},x_k)$ belongs to the first path and separates $x_0$ from the second path, and the diagonal $(x_{n-l-1},x_{n-l})$ belongs to the second path and separates $x_{n-1}$ from the first path. Therefore, there does not exist any edge that can be added to the union of these two paths in order to form a simple path (except for the deleted edge).

On the other hand, for any deletion of one of the first $k$ edges or one of the last $l$ edges of a path $P(v)$ of type $t$, there exist at most 4 edges that can be added to the union of the two paths in order to obtain a Hamiltonian path. Hence, the number of neighbors of $v$ in $G(P)$ is bounded by $3(k+l)=3t$.

In addition, for any path of type $t$ there are $t-3$ possible choices of $k,l$ as above, and thus, the number of paths of type $t$ whose endpoint is $x_0$ is bounded by $O(t \cdot 2^{n-1-t})$, which implies that the total number of paths of type $t$ is bounded by $O(n \cdot t \cdot 2^{n-1-t})$. To conclude, the number of edges of $G(P)$ of the form $(v,v')$ where $P(v)$ is a path of type $t$ with at least two diagonals, is bounded by
$O(n \cdot t^2 \cdot 2^{n-1-t})$.

The number of edges of $G(P)$ of the form $(v,v')$ where $P(v)$ and $P(v')$ are paths that contain at most one diagonal, is bounded by $O(n^3)$ and thus is negligible.

Putting things together, the number of edges in $G(P)$ is at most
\begin{align*}
|E(G(P))| &=O \left(\sum_{t=4}^{n-1} n \cdot t^2 \cdot 2^{n-1-t} \right) = O \left(n \cdot 2^{n-1} \cdot \sum_{t=4}^{n-1}\frac{t^2}{2^t} \right) \\
&\leq O \left(n \cdot 2^{n-1} \cdot \sum_{t=0}^{\infty}\frac{(t+2)(t+1)}{2^t} \right) = O(n \cdot 2^{n-1}) \\
&= O(N),
\end{align*}
where the penultimate equality follows from the well-known equality $$\sum_{t=0}^{\infty}\frac{(t+2)(t+1)}{2^t} = 16,$$ that can be easily proved by differentiating twice the series $\sum_{t=0}^{\infty}x^t$ and substituting $x=0.5$.
\qed
\end{proof}


\section*{Open Problems}
\label{sec:open-problems}

\medskip \noindent We conclude this paper with a few questions for further research that stem from our results.

\medskip \noindent \textbf{The automorphism group of other subgraphs of $\mathcal{T}(P)$.}
In~\cite{Hernando}, Hernando showed that if $P$ is a set of points in convex position and $\T(P)$ is its
geometric tree graph, then $\mathrm{Aut}(\T(P)) \cong D_{n}$, as we showed for $G(P)$. In view of the fact that
$G(P)$ is a subgraph of $\T(P)$, it is reasonable to ask whether $\mathrm{Aut}(G'(P)) \cong D_{n}$ holds also
for other subgraphs $G(P) \subset G'(P) \subset \T(P)$. For example, does this hold for the graph of simple
spanning trees with maximal degree $\leq d$?

\medskip \noindent \textbf{Points in general position.}
What can be said if the points of $P$ are in general (rather than convex) position? Can we prove that
$\mathrm{Aut}(G(P)) \cong \mathrm{Aut(K(P))}$?

\medskip \noindent \textbf{Abstract graphs.} What happens in the abstract case? That is, if $G'(P)$ is the path graph of abstract $K(P)$, is this true that $\mathrm{Aut}(G'(P)) \cong \mathrm{Aut}(K(P)) \cong S_n$?
It was shown in~\cite{KP15+} that this holds for the tree graph of $K(P)$.

\old{
\begin{enumerate}

\item In~\cite{Hernando}, Hernando showed that if $P$ is a set of points in convex position and $\T(P)$ is its
geometric tree graph, then $\mathrm{Aut}(\T(P)) \cong D_{n}$, as we showed for $G(P)$. In view of the fact that
$G(P)$ is a subgraph of $\T(P)$, it is reasonable to ask whether $\mathrm{Aut}(G'(P)) \cong D_{n}$ holds also
for other subgraphs $G(P) \subset G'(P) \subset \T(P)$. For example, does this hold for the graph of plane
spanning trees with maximal degree $d$?

\item What can be said if the points of $P$ are in general (rather than convex) position? Can we prove that
$\mathrm{Aut}(G(P)) \cong \mathrm{Aut(K(P))}$? Is this true at least that any automorphism of $K(P)$ induces
an automorphism of $G(P)$?

\item What happens in the abstract case? That is, if $G'(P)$ is the path graph of abstract $K(P)$,
is this true that $\mathrm{Aut}(G'(P)) \cong \mathrm{Aut}(K(P)) \cong S_n$? It was shown in~\cite{KP15+} that
this holds for the tree graph of $K(P)$.

\end{enumerate}
}


\subsection*{Acknowledgments}
\label{sec:acknowledgments}

The authors are grateful to Gila Morgenstern for her contribution in the first steps of this research.


%

\begin{thebibliography}{99}

\bibitem{AAH02} O. Aichholzer, F. Aurenhammer, and F. Hurtado, Sequences of spanning
trees and a fixed tree theorem, {\it Comput. Geom.} \textbf{21} (2002), pp.~3–-20.

\bibitem{AR07} O. Aichholzer and K. Reinhardt, A quadratic distance bound on sliding
between crossing-free spanning trees, {\it Comput. Geom.} \textbf{37} (2007), pp.~155-–161.

\bibitem{AIM07} S. G. Akl, K. Islam, and H. Meijer, On planar path transformation,
{\it Inform. Process. Let.} \textbf{104} (2007), pp.~59-–64.

\bibitem{AvisFukuda} D. Avis and K. Fukuda, Reverse Search for Enumeration,
Discrete Applied Mathematics \textbf{65(1)}, pp.~21--46, 1996.

\bibitem{Bondy} J. A. Bondy and R. L. Hemminger, Graph reconstruction -- a survey,
J. Graph Theory \textbf{l} (1977), pp.~227-–268.


\bibitem{CW09} J.-M. Chang and R.-Y. Wu, On the diameter of geometric path graphs of
points in convex position, {\it Inform. Process. Letters} \textbf{109(8)} (2009),
pp.~409-–413.

\bibitem{Cummins} R. L. Cummins, Hamilton circuits in tree graphs, IEEE Trans.
Circuit Th., \textbf{13(1)} (1966), pp.~82--90.


\bibitem{FFHHUW09} R. Fabila-Monroy, D. Flores-Pe\~{n}aloza, C. Huemer, F. Hurtado, J. Urrutia, and D. R. Wood,
On the chromatic number of some flip graphs, {\it Disc. Math. Theor. Comput. Sci.} \textbf{11(2)} (2009),
pp.~47--56.

\bibitem{GNT00} A. Garc\'{i}a, M. Noy, and J. Tejel, Lower bounds on the number of crossing
free subgraphs of $K_n$, {\it Comput. Geom.} \textbf{16} (2000), pp.~211-–221.

\bibitem{Hernando1} M. C. Hernando, F. Hurtado, A. M$\mathrm{\acute{a}}$rquez,
M. Mora and M. Noy, Geometric Tree Graphs of Points in Convex Position, Discrete
Applied Mathematics \textbf{93(1)}, pp.~51--66, 1999.

\bibitem{Hernando} M. C. Hernando, Complejidad de Estructuras
Geom$\mathrm{\acute{e}}$tricas y Combinatorias, Ph.D. Thesis,
Universitat Polit$\mathrm{\acute{e}}$ctnica de Catalunya, 1999 (in
Spanish). Available online at:
http://www.tdx.cat/TDX-0402108-120036/


\bibitem{KP15+}C. Keller and M. A. Perles, Reconstruction of the geometric structure of a set of points in the plane from its geometric tree graph, {\it Discrete and Computational Geometry} \textbf{55(3)} (2016), pp. 610-637.






\bibitem{RU01} E. Rivera-Campo and V. Urrutia-Galicia, Hamilton cycles in the path graph of a set
of points in convex position, {\it Comput. Geom.} \textbf{18} (2001), pp.~65-–72.



\bibitem{Virginia-PhD} V. Urrutia-Galicia, Algunas Propiedades de Gr$\mathrm{\acute{a}}$ficas
Geom$\mathrm{\acute{e}}$tricas, Ph.D. Thesis, Universidad Auton$\mathrm{\acute{o}}$ma Metropolitana Unidad
Iztapalapa,  M$\mathrm{\acute{e}}$xico D.F., 2001 (in Spanish).

\bibitem{WCPW11} R.-Y. Wu, J.-M. Chang, K.-J. Pai, and Y.-L. Wang, Amortized efficiency of generating
planar paths in convex position, {\it Theor. Comput. Sci.} \textbf{412(35)} (2011), pp.~4504-–4512.


\end{thebibliography}


%

\end{document}